\theoremstyle{plain}
\newtheorem{theorem}{Theorem}[section]
\newtheorem{lemma}[theorem]{Lemma}
\theoremstyle{definition}
\theoremstyle{remark}
\newtheorem*{theorem*}{Theorem}
\newcommand{\sfA}{\mathsf{A}}
\newcommand{\sfB}{\mathsf{B}}
\newcommand{\sfC}{\mathsf{C}}
\newcommand{\sfE}{\mathsf{E}}
\newcommand{\sfH}{\mathsf{H}}
\newcommand{\sfL}{\mathsf{L}}
\newcommand{\cC}{\mathcal{C}}
\newcommand{\cW}{\mathcal{W}}
\newcommand{\C}{\mathbb{C}}
\newcommand{\D}{\mathbb{D}}
\newcommand{\N}{\mathbb{N}}
\newcommand{\R}{\mathbb{R}}
\newcommand{\T}{\mathbb{T}}
\newcommand{\Capacity}{\operatorname{Cap}}
\newcommand{\boldt}{\boldsymbol{t}}
\newcommand{\boldd}{\boldsymbol{d}}
\newcommand{\boldz}{\boldsymbol{z}}
\newcommand{\boldr}{\boldsymbol{r}}
\newcommand{\boldalpha}{\boldsymbol{\alpha}}
\newcommand{\boldlambda}{\boldsymbol{\lambda}}
\newlist{steps}{enumerate}{1}
\setlist[steps, 1]{label = Step \arabic*:}
\newcommand{\re}{\operatorname{Re}}
\newcommand{\red}[1]{#1}
\begin{document}


\title{Computing Chebyshev polynomials using the complex Remez algorithm}

\author{Lennart Aljoscha H\"{u}bner}
\address{Department of Mathematics,
KU Leuven,
Celestijnenlaan 200B,
3001 Leuven, Belgium}
\email{lennart.huebner@kuleuven.be}

\author{Olof Rubin}
\address{Department of Mathematics,
KU Leuven,
Celestijnenlaan 200B,
3001 Leuven, Belgium}
\email{olof.rubin@kuleuven.be}

\keywords{Chebyshev polynomials, Remez algorithm, Widom factors, Zeros of polynomials.}
\subjclass[2020]{41A50, 65E05, 30C10}
\maketitle
\begin{abstract}
We employ the generalized Remez algorithm, initially suggested by P. T. P. Tang, to perform an experimental study of Chebyshev polynomials in the complex plane. Our focus lies particularly on the examination of their norms and zeros. What sets our study apart is the breadth of examples considered, coupled with the fact that the degrees under investigation are substantially higher than those in previous studies where other methods have been applied. These computations of Chebyshev polynomials of high degrees reveal discernible, repeating patterns, which indicate a typical behavior of Chebyshev polynomials in a general setting. The use of Tang's algorithm allows for computations executed with precision, maintaining accuracy within quantifiable margins of error. Additionally, as a result of our experimental study, we propose what we believe to be a fundamental relationship between Chebyshev and Faber polynomials associated with a compact set.
\end{abstract}

\section{Introduction}
\label{sec:intro}
Let $\sfE$ be a compact subset of the complex plane $\C$. Our focus is directed towards monic polynomials that exhibit minimal deviation from zero over the set $\sfE$. In other words, for any given positive integer $n$ we want to find the coefficients $a_0^\ast,\dotsc,a_{n-1}^\ast$ satisfying
\begin{equation}
    \max_{z\in \sfE}\left|z^n+\sum_{k=0}^{n-1}a_k^\ast z^k\right| = \min_{a_0,\dotsc,a_{n-1}\in \C}\max_{z\in \sfE}\left|z^n+\sum_{k=0}^{n-1}a_kz^k\right|.
    \label{eq:cheb_definition}
\end{equation}
The existence of minimizing coefficients $a_0^\ast,\dotsc,a_{n-1}^\ast$ is guaranteed through a compactness argument. However, such a minimizer does not need to be unique. If $\sfE$ is \red{finite and consists of} $m<n$ points\red{,} then there are \red{infinitely many different minimizing polynomials}. This is the only exceptional case, the assumption that $\sfE$ consists of infinitely many points ensures the uniqueness of a monic \red{$n$th degree} minimizer of \eqref{eq:cheb_definition} for any $n\in \mathbb{N}$. This is the so-called Chebyshev polynomial of degree $n$ corresponding to the set $\sfE$ \red{which we denote by $T_n^{\sfE}$}. For basic theory detailing the proofs of existence and uniqueness of Chebyshev polynomials we refer the reader to \red{\cite{achieser56,cheney66,christiansen-simon-zinchenko-I, novello-schiefermayr-zinchenko21, rivlin90, iske18,smirnov-lebedev68}}.

Throughout this text we reserve the notation $\|\cdot\|_{\sfE}$ to denote the maximum norm on $\sfE$ and let $\D$ denote the open unit disk and $\T$ the unit circle.

Historically\red{,} the consideration of polynomial minimizers with respect to the maximum norm originates from the studies of P.~L.~Chebyshev who considered minimization on $[-1,1]$, see \cite{chebyshev54}. Chebyshev polynomials corresponding to real sets have been much better understood than the corresponding complex ones. The reason for this discrepancy in understanding can partially be attributed to the powerful alternation theorem\red{,} which is valid for real Chebyshev polynomials, \cite[p. 75]{cheney66}. For any compact set $\sfE\subset \R$ containing at least $n+1$ points\red{,} the Chebyshev polynomial $T_n^{\sfE}$ is characterized by having an alternating set on $\sfE$ consisting of $n+1$ points. That is to say, there are points $x_0<x_1<\cdots <x_n$ all contained in $\sfE$ such that 
\begin{equation}
    T_n^{\sfE}(x_k) = (-1)^{n-k}\|T_n^{\sfE}\|_{\sfE}.
    \label{eq:alternation_theorem}
\end{equation}
This alternating property, whose analogue can be shown for \red{any} real \red{best approximations}, constitutes the theoretical grounding for the classical Remez algorithm which is used to compute real-valued best approximations, see \cite{cheney66,iske18, remez34-1, remez34-2}.

\subsection{Chebyshev polynomials in the complex plane}
\red{Crucially}, the alternation property \red{fails to characterize} Chebyshev polynomials for general complex sets $\sfE\subset \C$. Apart from the fact that the argument of a Chebyshev polynomial at an extremal point can be any angle, not just $k\pi$ with $k\in \mathbb{Z}$, the number of extremal points corresponding to $T_n^{\sfE}$ on $\sfE$ can vary greatly. While there are at least $n+1$ such extremal points on $\sfE$, see e.g. \cite[Theorem 1, p. 446]{smirnov-lebedev68}, there is no upper bound on the number of extremal points. Indeed, as the example $T_n^{\T}(z) = z^n$ shows, the entire sets may consist of extremal points of the Chebyshev polynomial. 

One approach to studying Chebyshev polynomials in the complex plane comes from the fruitful interplay between approximation theory and potential theory. \red{We recall that the logarithmic capacity is a quantity associated with any compact set in the complex plane, see \cite[\S 5.1]{ransford95}. Denoting it with $\Capacity$, }Szeg\H{o} \cite{szego24} proved that
\begin{equation}
    \|T_n^{\sfE}\|_{\sfE}\geq \Capacity(\sfE)^n.
    \label{eq:szego_inequality}
\end{equation}
A recent proof of this fundamental inequality can be found in \cite[Theorem 5.5.4]{ransford95}. Since the capacity and radius of a disk \red{coincide,} this provides an easy way of seeing that $T_n^{\T}(z) = z^n$. If $P(z) = a_mz^m+a_{m-1}z^{m-1}+\cdots+a_0$ is a polynomial of exact degree $m$, then \cite[Theorem 5.2.5]{ransford95} says that
\begin{equation}
    \Capacity\Big(P^{-1}(\sfE)\Big) = \left(\frac{\Capacity(\sfE)}{|a_m|}\right)^{1/m}.
    \label{eq:capacity_preimage}
\end{equation}
\red{If we additionally assume that} $P$ is a monic polynomial of degree $m$ and \red{define the filled-in lemniscate} $\sfE_P = \{z: |P(z)|\leq r\}$, then we gather as a consequence of \eqref{eq:szego_inequality}, \eqref{eq:capacity_preimage} and the uniqueness of Chebyshev polynomials that
\begin{equation}
    T_{nm}^{\sfE_P}(z) = P(z)^n.
    \label{eq:cheb_nm_sequence}
\end{equation}

This example, whose \red{origin} can be traced back to Faber \cite{faber20}, constitutes one of the few cases where the Chebyshev polynomials are determined for certain degrees. In general, for a given compact set $\sfE$, it is rarely the case that the Chebyshev polynomials $T_n^{\sfE}$ admit explicit representations. For instance, the Chebyshev polynomials corresponding to $\{z: |P(z)|\leq r\}$ of degrees other than multiples of $\deg(P)$, remain unknown in the general case. 

Chebyshev polynomials appear in various applications. The classical Chebyshev polynomials\red{,} \red{which minimize the supremum norm on the interval $[-1,1]$,} are fundamental for numerical analysis and approximation theory. This is, to a large extent, due to their relation with Fourier analysis. Chebyshev polynomials on unions of intervals further appear as discriminants corresponding to Jacobi matrices\red{,} which in turn are related to periodic Schr\"{o}dinger operators, see \cite[\S 2]{christiansen-simon-zinchenko-I}. 

The \red{extension} of Chebyshev polynomials to complex sets can also be motivated by \red{their} applicability. \red{For example, \cite{greenbaum-trefethen-94} explains how matrix-valued Chebyshev polynomials have applications} to \red{iterative Krylov subspace methods,} such as the Arnoldi iteration\red{,} which is used to estimate eigenvalues of matrices. Such potential links are further considered in \cite{toh-trefethen98, faber-liesen-tichy10}. If the matrix in question is normal then the matrix-valued Chebyshev polynomials coincide with the Chebyshev polynomials on the spectrum of the \red{corresponding} matrix. 

\red{Closely related, residual Chebyshev polynomials are also minimizers of the supremum norm on a compact set, but instead of being monic they are normalized to attain the value $1$ at some specified point. When the reference point is at infinity this normalization should be understood as fixing the leading coefficient to be $1$ and hence the Chebyshev polynomials emerge. This modification gives rise to differences but many properties are shared.} Residual matrix-valued Chebyshev polynomials appear \red{while estimating the convergence rate} of the GMRES algorithm, see \cite{greenbaum-trefethen-94}.  For theoretical aspects of \red{residual Chebyshev polynomials} see \cite{christiansen-simon-zinchenko-V}. \red{While we will focus on Chebyshev polynomials, the numerical methods presented here can be applied to the residual case as well.}

\red{We believe these examples indicate that determining Chebyshev polynomials is interesting for various reasons} and not limited to understanding fundamental properties of approximation theory.

\subsection{Two different approaches}

To remedy the fact that Chebyshev polynomials \red{typically lack explicit formulas}, one common approach to \red{understand} their asymptotic behavior is to compare them to \red{other} classes of polynomials. One such class of polynomials \red{is} the Faber polynomials \cite{faber20}. If $\sfE\subset \C$ is a simply connected compact set which consists of more than one point, \red{then} there exists a conformal mapping $\Phi:\C\setminus \sfE\rightarrow \C\setminus \overline{\D}$ of the form \begin{equation}
    \Phi(z) = \Capacity(\sfE)^{-1}z+a_0+a_{-1}z^{-1}+\cdots,
    \label{eq:exterior-conformal}
\end{equation}
see \cite[Chapter 2]{smirnov-lebedev68}. The \red{(normalized) $n$th} Faber polynomial, denoted $F_n^\sfE$, is the monic polynomial of degree $n$ defined by the equation
\begin{equation}
    \Big(\Capacity(\sfE)\Phi(z)\Big)^n = F_n^\sfE(z)+O(z^{-1}),\quad z\rightarrow \infty.
    \label{eq:faber_definition}
\end{equation}
In certain rare cases\red{,} the Chebyshev polynomials and Faber polynomials corresponding to a set coincide \cite{faber20}. \red{In other cases, the Faber polynomials provide near minimal upper bounds.} If $\sfE$ is the closure of an analytic Jordan domain \red{then}
\begin{equation}
    \|F_n^{\sfE}\|_{\sfE} = \Capacity(\sfE)^n\Big(1+O(r^n)\Big)
    \label{eq:faber_norms}
\end{equation}
for some $0<r<1$, see e.g. \cite{faber20,widom69}. This implies that the sequence $\{F_n^\sfE\}$ asymptotically saturates \eqref{eq:szego_inequality} \red{in this case and \[\frac{\|T_n^{\sfE}\|_{\sfE}}{\Capacity(\sfE)^n}
\rightarrow 1,\quad \text{as}\quad n\rightarrow \infty\] }\red{which is optimal as seen from \eqref{eq:szego_inequality}.} \red{The Widom factor was introduced in \cite{goncharov-hatinoglu15} and is defined as}
\begin{equation}
    \cW_n(\sfE)\coloneqq\frac{\|T_n^{\sfE}\|_{\sfE}}{\Capacity(\sfE)^n}.
    \label{eq:widom_factor}
\end{equation}
Much of the research into Chebyshev polynomials is directed to understanding the asymptotic behavior of $\cW_{n}(\sfE)$. In \cite{widom69} and later \cite{suetin74} conditions to guarantee that $\cW_n(\sfE)\rightarrow 1$ as $n\rightarrow \infty$ were relaxed by means of comparison with Faber polynomials. If $\sfE$ is the closure of a Jordan domain with $C^{1+\alpha}$ boundary then it follows from \cite[Theorem 2, p.68]{suetin84} that
\begin{equation}
	\cW_n(\sfE)\leq \frac{\|F_n\|}{\Capacity(\sfE)^n} = 1+O\left(\frac{\log n}{n^\alpha}\right).
\end{equation} It is an open question if the conditions concerning the regularity of the boundary can be further relaxed while still guaranteeing that the corresponding Widom factors converge to the theoretical minimal value. For instance, if $\sfE$ is the closure of a Jordan domain such that the bounding curve is piecewise-analytic but contains corner points can we still conclude that 
	\[\lim_{n\rightarrow \infty}\cW_n(\sfE)=1?\]
It is known that some level of smoothness of the bounding curve is required for $\cW_n(\sfE)\rightarrow 1$ to hold as there are known examples of fractal Jordan domains such that the Widom factors, at least along a subsequence, are bounded below by $1+\delta$ for some $\delta>0$. This can be deduced from results in \cite{kamo-borodin94}. 

\red{Through a comparison with Faber polynomials it can be shown that if $\sfE$ is a convex set then $\cW_n(\sfE)\leq 2$ for all $n$. A proof of this fact using an upper bound on the norm of the corresponding Faber polynomials deduced from \cite[Theorem 2]{kovari-pommerenke67} can be found in \cite[Theorem 10]{rubin24}. A strengthened form of the estimate is given in \cite[Theorem 4.1]{abdullayev-savchuk-tunc20}.}

\red{ Faber polynomials are not the only polynomials useful for studying Chebyshev polynomials. A completely different class of trial polynomials were used} in \cite{andrievskii-nazarov19} to prove that the sequence $\{\cW_n(\sfE)\}$ remains bounded if $\sfE$ is the closure of a quasi-disk. For examples illustrating the close interplay between Faber polynomials and Chebyshev polynomials, we refer the reader to \cite{saff-totik90, widom69}. In Section \ref{sec:computations} we will explore a possible relation between the Chebyshev and Faber polynomials that has been observed numerically. Loosely formulated this entails that Chebyshev polynomials approach Faber polynomials for a fixed degree along certain curves related to the conformal map of the set in question.
 
Besides understanding the norm behavior, another point of interest \red{lies in} understanding how the geometry of a set affects the zero distributions of the corresponding Chebyshev polynomials. Given a polynomial $P$, let $\nu(P)$ denote the normalized zero-counting measure of $P$. That is,
\[\nu(P) = \frac{1}{\deg(P)}\sum_{j=1}^{\deg(P)}\delta_{z_j}\red{,}\]
where $\delta_z$ is the Dirac delta measure at $z$ and $\{z_j\}$ denotes the zeros of $P$ counting multiplicity. Given a compact set $\sfE$, a typical quantitative way of describing the asymptotical distribution of the zeros of $T_n^{\sfE}$ is by determining weak-star limits of the sequence of measures $\{\nu(T_n^{\sfE})\}$. As it turns out, such weak-star limits are closely related to the potential theoretic concept of \red{the} equilibrium measure. We therefore introduce the notation $\mu_{\sfE}$ to denote the equilibrium measure corresponding to a compact set $\sfE$, see \cite[\S 3.3]{ransford95}. Given a sequence of degrees $\{n_k\}$\red{,} \cite[Theorem 2.1.7]{andrievskii-blatt01} says that if
\begin{equation}
    \lim_{k\rightarrow \infty}\nu(T_{n_k}^{\sfE})(M) = 0
    \label{eq:zeros_inside}
\end{equation}
for every compact set $M$ in the interior of $\sfE$ then $\nu(T_{n_k}^{\sfE})\xrightarrow{\ast}\mu_{\sfE}$ as $n_k\rightarrow \infty$. Loosely formulated, if ``almost all'' of the zeros of $T_{n_k}^{\sfE}$ approach the boundary then they distribute according to equilibrium measure. In particular, if $\sfE$ has empty interior then $\nu(T_{n}^{\sfE})\xrightarrow{\ast}\mu_{\sfE}$ as $n\rightarrow \infty$. It is shown in \cite{saff-totik90} that the zeros of $T_n^{\sfE}$\red{,} when $\sfE$ is the closure of a Jordan domain, stay away from the boundary precisely when the bounding curve is analytic. It therefore follows that if $\sfE$ is the closure of a Jordan domain whose boundary contains a corner\red{,} then the zeros of $T_n^{\sfE}$ will approach the boundary in some fashion. The question we want to investigate is if we can discern that \eqref{eq:zeros_inside} \red{is plausible} for such sets.

It could be argued that in order to study Chebyshev polynomials there are two available approaches. One \red{way} is to try to compare Chebyshev polynomials with other classes of polynomials which are candidates to provide small maximum norms such as the Faber polynomials. The other approach to studying Chebyshev polynomials -- which will be the main focus of this article -- is to consider computing these polynomials. In our case\red{,} these computations will be performed using numerical approximations. Such considerations are somewhat scarce in the literature\red{,} although examples exist which rely on other methods than the ones presented here. See for instance \red{\cite{foucart-lasserre19,grothkopf-opfer82, opfer76,thiran-detaille91, toh-trefethen98}}. In this article we will discuss and apply an algorithm suggested by P.~T.~P. Tang that was presented in his PhD thesis \cite{tang87} and further developed by B.~Fischer and J.~Modersitzki in \cite{fischer-modersitzki-93}. More specifically\red{,} we will compute Chebyshev polynomials corresponding to a wide variety of compact sets in the complex plane. Doing so, it will become apparent that certain hypothesis can be made plausible using numerical computations. See \cite{fischer-modersitzki-93, komodromos-russell-tang95, tang88} for further developments of this algorithm. 

\red{Another, well-known algorithm which could be used for the purpose of computing Chebyshev polynomials is the Lawson algorithm and its extensions, see \cite{lawson61}. One big difference between Tang's algorithm and the Lawson algorithm is that the Lawson algorithm computes the best approximation on a discrete subset. A detailed comparison between the performance of different algorithms that can be used to compute best approximations is conducted in \cite{tang87}.}

\red{For examples of studies of orthogonal polynomials based on experimental approaches, see \cite{alpan-goncharov-simsek18, kruger-simon15}.}

\subsection{Outline}
This article is organized as follows.

In Section \ref{sec:algorithm}, a short discussion concerning Tang's algorithm from \cite{tang87} is presented. In particular\red{, we demonstrate how it can be used to compute} Chebyshev polynomials. This section serves as the method part of the article. A pseudocode implementation is provided in the appendix as Algorithm \ref{alg:remez_algorithm}. 

In Section \ref{sec:computations}, we present numerical findings related to computations of Chebyshev polynomials using Tang's algorithm. In particular, Widom factors and zeros are computed for regular polygons, the $m$-cusped hypocycloid, circular lunes and the Bernoulli lemniscate. We also compare the difference between Chebyshev polynomials and Faber polynomials for \red{some of these} sets.

In Section \ref{sec:discussion}, the results from Section \ref{sec:computations} are discussed and we \red{formulate hypotheses about the behavior of Chebyshev polynomials} based on these \red{numerical experiments}. \red{Our main hypothesis is that the asymptotic behavior of Faber polynomials and Chebyshev polynomials is closely related}.

\red{\section*{Acknowledgement}
The authors are grateful to Frank Wikstr\"{o}m and Jacob Stordal Christiansen, both
at Lund University. The former for providing excellent advice on the presentation of the numerical experiments in this manuscript and the latter for helpful
discussions on the material presented herein. The anonymous referee is also thanked for constructive comments.
}

\red{The research of L. H\"{u}bner was supported by Odysseus grant number G0DDD23N from the FWO.}

\section{Numerical computations of Chebyshev polynomials}
\label{sec:algorithm}

In the following, we consider the procedure of approximating complex-valued functions on a \red{	connected} compact subset of the complex plane, henceforth denoted $\sfE$. \red{In line with the setting considered in \cite{tang87,tang88}, we restrict ourselves to real linear spaces} in the sense that all scalars appearing in linear combinations will be real-valued. Since any $k$-dimensional complex space can be regarded as a $2k$-dimensional space over the real numbers \red{this does not impose any restriction}. We introduce the notation $\cC_\R(\sfE)$ to denote the linear space of complex-valued continuous functions on $\sfE$, \red{where linear combinations are formed with real scalars}. We further let $V$ denote an $n$-dimensional subspace of $\cC_{\R}(\sfE)$ with an associated basis $\{\varphi_k\}_{k=1}^{n}$. The algorithm developed by Tang computes the best approximation $\varphi^\ast$ to $f$ \red{in the space} $V$. In other words,
\[ \|f-\varphi^\ast\|_\sfE\leq \|f-\varphi\|_\sfE\]
for every $\varphi\in V$. We assume throughout that $\varphi^\ast$ is unique. This will be the case when studying Chebyshev polynomials on a continuum, \red{that is,} a compact connected set containing infinitely many points. To conform to the case of Chebyshev polynomials, we let $f(z) =z^n$ and $\varphi$ denote a complex polynomial of degree at most $n-1$.

As usual, we let $\cC_\R(\sfE)^\ast$ denote the dual space of $\cC_\R(\sfE)$ and $V^\perp$ those linear functionals in $\cC_\R(\sfE)^\ast$ that vanish on $V$. The Riesz representation theorem states that any real linear functional in $\cC_\R(\sfE)^\ast$ can be represented through the formula
\[Lf = \mathrm{Re}\int_{\sfE}fd\mu,\]
where $\mu$ is a complex Borel measure. The extension theorem of Hahn--Banach implies an elementary relation between linear functionals and distance minimizing elements for Banach spaces. From \cite[Theorem 7 in \S 8.2]{lax02} we see that
\begin{equation}
    \min_{\varphi\in V}\|f-\varphi\|_{\sfE} = \max_{\substack{L\in V^\perp \\ \|L\|\leq 1}}|Lf|.
    \label{eq:duality_theorem}
\end{equation}
As stated, \eqref{eq:duality_theorem} provides no substantial information on the actual maximizing linear functional. The space of all complex Borel measures on $\sfE$ may prove too unwieldy to deal with in any practical situation. However, there exists maximizing linear functionals satisfying \eqref{eq:duality_theorem} with a specific simple form as was shown by Zuhovicki\u{\i} and Remez, see e.g. \cite[Theorem 2, p. 437]{smirnov-lebedev68}. The value in \eqref{eq:duality_theorem} coincides with the maximal value of all expressions of the form 
\begin{equation}
    L_{\boldr,\boldalpha,\boldz}(f) = \sum_{j=1}^{n+1}r_j\mathrm{Re}(e^{-i\alpha_j}f(z_j))
    \label{eq:maximizing_linear_functional_form}
\end{equation}
where $\boldr = \{r_j\}_{j=1}^{n+1}\in [0,1]^{n+1}$, $\boldalpha = \{\alpha_j\}_{j=1}^{n+1}\in [0,2\pi)^{n+1}$ and $\boldz=\{z_j\}_{j=1}^{n+1}\in \sfE^{n+1}$ are subject to the following constraints:
\begin{equation}
	\sum_{j=1}^{n+1}r_j = 1,
    \label{eq:condition_convex} 
\end{equation}
\begin{equation}
    L_{\boldr,\boldalpha,\boldz}(\varphi)=\sum_{j=1}^{n+1}r_j\mathrm{Re}(e^{-i\alpha_j}\varphi(z_j)) = 0,\quad \forall \varphi\in V.
    \label{eq:condition_annihilate}
\end{equation}
The goal of using Tang's algorithm, which is further illustrated in Appendix \ref{sec:appendix}, \red{is to compute the maximizing functional as in \eqref{eq:maximizing_linear_functional_form}}. The algorithm produces a sequence of linear functionals $\{L_{\boldr^{(\nu)},\boldalpha^{(\nu)},\boldz^{(\nu)}}\}$ together with an associated sequence of approximants $\{\varphi^{(\nu)}\}$, \red{such that} $L_{\boldr^{(\nu)},\boldalpha^{(\nu)},\boldz^{(\nu)}}(f)$ is increasing in $\nu$ and
\begin{equation}
	L_{\boldr^{(\nu)},\boldalpha^{(\nu)},\boldz^{(\nu)}}(f)\leq \|f-\varphi^\ast\|_{\sfE}\leq \|f-\varphi^{(\nu)}\|_{\sfE}.
	\label{eq:linear_functional_sup_norm_inequality}
\end{equation}
 
\red{Tang's algorithm will be applied until a wanted accuracy level, in terms of the relative error given by 
\begin{equation}
	\frac{\|f-\varphi^{(\nu)}\|_\sfE-L_{\boldr^{(\nu)},\boldalpha^{(\nu)},\boldz^{(\nu)}}(f)}{L_{\boldr^{(\nu)},\boldalpha^{(\nu)},\boldz^{(\nu)}}(f)}
\end{equation} is reached. Throughout, any reference to the precision of our computations will be in terms of this relative error.}
 
One of the \red{novel features of} Tang's algorithm in comparison to previous algorithms at the time of its inception is that it can be shown to converge quadratically if certain conditions are met, see \cite{tang87,tang88} for further details. \red{Assuming} that $\boldr^{(\nu)}>0$ for all sufficiently large $\nu\in \N$ then
\[\liminf_{\nu\rightarrow \infty}\frac{\|f-\varphi^{(\nu)}\|_\sfE-L_{\boldr^{(\nu)},\boldalpha^{(\nu)},\boldz^{(\nu)}}(f)}{L_{\boldr^{(\nu)},\boldalpha^{(\nu)},\boldz^{(\nu)}}(f)} = 0.\]
A simple proof of this can be found in \cite{fischer-modersitzki-93}. \red{We note that in our computations of Chebyshev polynomials, rapid convergence is typically observed.}
\section{Computations of Chebyshev polynomials}
\label{sec:computations}
We now turn to the computation of Chebyshev polynomials in the complex plane \red{and} stress the fact that this section will only contain computational results. \red{These results will be discussed} in Section \ref{sec:discussion}. \red{Let $\sfE$ be a polynomially convex set in the complex plane, by the Maximum principle $T_n^{\sfE} = T_n^{\partial \sfE}$.} To translate the notation from Section \ref{sec:algorithm}, we let $n$ be a specified degree and $\gamma:[0,1]\rightarrow \C$ a parametrization of \red{$\partial \sfE$}. In order to compute $T_n^{\sfE}$ we let $f(t) = \gamma(t)^n$ and in the general case, we choose the basis as
\begin{align*}
    \begin{bmatrix}
    \varphi_1(t)& \varphi_2(t) & \cdots & \varphi_n(t) & \varphi_{n+1}(t) & \varphi_{n+2}(t) & \cdots & \varphi_{2n}(t)
\end{bmatrix}\\ = \begin{bmatrix}
    1 & \gamma(t) & \cdots & \gamma(t)^{n-1} & i & i\gamma(t) & \cdots & \red{i\gamma(t)^{n-1}}
\end{bmatrix}.
\end{align*}
The algorithm, applied to this setting, \red{is designed to} produce coefficients $\lambda_1,\red{\dots,}\,  \lambda_{2n}$ such that
\[T_n^{\sfE}(z) = z^{n}-\sum_{k=1}^{n}(\lambda_k+i\lambda_{n+k})z^{k-1}.\]
In many cases it is possible to exploit the symmetry of a set to reduce the size of the basis which significantly helps with speeding up the computation. As an example\red{,} if $\sfE$ is \red{symmetric with respect to complex conjugation,} meaning that
\[z\in \sfE\quad\Leftrightarrow \quad\overline{z}\in \sfE,\]then by the uniqueness of $T_n^{\sfE}$ all coefficients must be real. Hence the basis can be chosen to be the $n$-dimensional real linear space spanned by
\[\varphi_k(t) = \gamma(t)^{k-1},\quad k = 1,\dotsc,n.\]
\red{Using the following lemma, it is possible to exploit the symmetry of the underlying set in order to make further reductions on the size of the basis used in Tang's algorithm}, see also \cite[Example 4.1]{christiansen-simon-zinchenko-V}.
\begin{lemma}
    Let $\sfE$ denote a compact infinite set satisfying
    \begin{equation}
    	\red{\sfE = \{ e^{2\pi i/m}z: z\in \sfE\}}
    	\label{eq:symmetry_set}
    \end{equation}
    \red{for some $m\in \N$}. \red{If} $n\in \N$ and $l\in \{0,1\dotsc,m-1\}$ \red{then}
    \begin{equation}
    	T_{nm+l}^{\sfE}(z) = z^{nm+l}+\sum_{k=0}^{n-1}a_kz^{km+l} = z^lQ_n(z^m)
    	\label{eq:symmetry_relation}
    \end{equation}
    where $Q_n$ denotes a monic polynomial of degree $n$.
    \label{lem:symmetry}
\end{lemma}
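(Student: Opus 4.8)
The plan is to exploit the uniqueness of the Chebyshev polynomial together with the rotational symmetry encoded in \eqref{eq:symmetry_set}. Write $\omega = e^{2\pi i/m}$ and set $N = nm+l$, so that \eqref{eq:symmetry_set} reads $\omega\sfE = \sfE$. First I would record the effect of the rotation $z\mapsto\omega z$ on the maximum norm over $\sfE$: since $\omega\sfE=\sfE$, for any polynomial $P$ we have $\max_{z\in\sfE}|P(\omega z)| = \max_{w\in\sfE}|P(w)|$, so precomposition with this rotation preserves $\|\cdot\|_{\sfE}$.

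Next I would introduce the candidate competitor $S(z) = \omega^{-N}T_N^{\sfE}(\omega z)$. Because $|\omega|=1$ and $T_N^{\sfE}$ is monic of degree $N$, the leading coefficient of $T_N^{\sfE}(\omega z)$ is $\omega^N$, and the factor $\omega^{-N}$ renormalizes $S$ to be monic of degree $N$; by the norm invariance just noted, $\|S\|_{\sfE} = \|T_N^{\sfE}\|_{\sfE}$. Thus $S$ is itself a monic minimizer of degree $N$, and since $\sfE$ is infinite the Chebyshev polynomial is unique, forcing $S = T_N^{\sfE}$. Unwinding the definition yields the functional equation $T_N^{\sfE}(\omega z) = \omega^N T_N^{\sfE}(z)$.

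Finally I would read off the admissible monomials. Writing $T_N^{\sfE}(z) = \sum_{j=0}^{N} c_j z^j$ with $c_N = 1$, the functional equation gives $c_j\omega^j = c_j\omega^N$ for every $j$, so $c_j = 0$ whenever $\omega^j \neq \omega^N$, that is whenever $j \not\equiv N \equiv l \pmod{m}$ (using that $\omega$ is a primitive $m$th root of unity). Only the degrees $l, m+l, \dots, nm+l$ survive; setting $a_k = c_{km+l}$ produces the expansion in \eqref{eq:symmetry_relation}, and factoring out $z^l$ gives $z^l Q_n(z^m)$ with $Q_n(w) = \sum_{k=0}^{n} a_k w^k$, which is monic of degree $n$ because $a_n = c_N = 1$.

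The argument is essentially self-contained once uniqueness is invoked, and I do not anticipate a genuine obstacle. The only point requiring care is the simultaneous bookkeeping that the rotation preserves both the norm and the monic normalization, which is precisely what the factor $\omega^{-N}$ arranges; the rest is the verification that \eqref{eq:symmetry_set} indeed gives $\omega\sfE=\sfE$ and that uniqueness applies because $\sfE$ is infinite.
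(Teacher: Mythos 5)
Your proposal is correct and follows essentially the same route as the paper: both consider the rotated, renormalized competitor (your $\omega^{-N}T_N^{\sfE}(\omega z)$ equals the paper's $e^{-2\pi il/m}T_{nm+l}^{\sfE}(e^{2\pi i/m}z)$ since $\omega^{-N}=\omega^{-l}$), invoke uniqueness of the Chebyshev polynomial to get the functional equation, and then compare coefficients to kill all monomials of degree $\not\equiv l \pmod{m}$. Your write-up is slightly more explicit about the coefficient bookkeeping, but there is no substantive difference.
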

\begin{proof}
    The proof is an easy consequence of the uniqueness of Chebyshev polynomials. Considering the polynomial
    \[\red{e^{-2\pi il/m}T_{nm+l}^{\sfE}(e^{2\pi i/m}z) = z^{nm+l}+\text{lower order terms},}\]
    we see that this is a monic polynomial \red{and from \eqref{eq:symmetry_set} we conclude that it has} the same norm as $T_{nm+l}^{\sfE}$ on $\sfE$. From \red{the} uniqueness of the corresponding Chebyshev polynomial  
    \[\red{e^{-2\pi il/m}T_{nm+l}^{\sfE}(e^{2\pi i/m}z) = T_{nm+l}^{\sfE}(z)}.\]
\red{The coefficient in front of $z^{j}$ on the left-hand side is equal to $e^{2\pi i(j-l)/m}$ multiplied with the coefficient in front of $z^j$ on the right-hand side. Hence, if $j-l$ fails to be a multiple of $m$ then this coefficient must be $0.$ This establishes that \eqref{eq:symmetry_relation} holds.}
\end{proof}

We will consider the computation of Chebyshev polynomials corresponding to a variety of sets \red{for which their asymptotic behavior remains} unknown. First\red{, we compute the} Widom factors $\cW_{n}$. Next\red{, we explore} a possible connection between Chebyshev polynomials and Faber polynomials using numerical experiments. Finally\red{, we examine the distribution of zeros of} $T_n^{\sfE}$.

\red{All computations are carried out in Python using the arbitrary-precision arithmetic module \texttt{mpmath}. For each computation, we use a working accuracy of at least 50 significant digits to avoid phenomena caused by numerical instabilities.}


\subsection{Computations of Widom factors}
As was already stated in Section \ref{sec:intro}, recall that if $\sfE$ denotes the closure of a Jordan domain with $C^{1+\alpha}$ boundary then it is known that $\cW_{n}(\sfE)\rightarrow 1$ as $n\rightarrow \infty$, see \cite{suetin74, suetin84,widom69}. If $\sfE$ is convex it is possible to conclude that $\cW_n(\sfE)\leq 2$, see \cite{rubin24}. If $\sfE$ is a quasi-disk then $\cW_n(\sfE)$ is known to be bounded \cite{andrievskii-nazarov19}. Likewise, \red{if the} outer boundary of $\sfE$ consists of \red{Dini}-smooth arcs which are disjoint apart from their endpoints \red{and do not form inward pointing cusps} \red{then} the sequence \red{$\{\cW_n(\sfE)\}$} is bounded, see \cite[Theorem 2.1]{totik-varga15}. Informally stated, an \red{inward pointing cusp (also external cusp in the literature)} is a point where the intersecting arcs form an angle of $2\pi$ \red{within} the interior of $\sfE$ so that \red{it} ``points away'' from the unbounded complement \red{of $\sfE$}. Apart from these results, very few general estimates exist regarding Widom factors for compact sets, even with the additional assumption that they are closures of Jordan domains.

Note that $\cW_n(\sfE)$ is invariant under dilations and translations in the sense that for any $\alpha,\beta\in \C$ with $\alpha\neq0$ we have
\[
\cW_n(\alpha \sfE +\beta) = \cW_n(\sfE),
\]
see \cite{goncharov-hatinoglu15}. \red{Thus, the set may always be rotated and scaled so that} symmetries can be easily exploited without affecting the Widom factors. \red{The results of the numerical computations are presented below, while their discussion is deferred to} Section \ref{sec:discussion}.

\subsubsection{Regular \red{polygons}}
Simple examples of piecewise-analytic Jordan domains with corners are the regular polygons (also called $m$-gons) if they have $m$ sides of equal length. Due to the convexity of such sets we immediately gather that if $\sfE$ is a regular polygon then $\cW_n(\sfE)\leq 2$. It is not known whether the sequence $\{\cW_n(\sfE)\}$ converges in this case and \red{we thus study} the corresponding Widom factors numerically. Previous numerical \red{studies} for Chebyshev polynomials corresponding to a square have been \red{conducted} in \cite{opfer76} for degrees up to 16. These, however, lack the perspective of Widom factors. The logarithmic capacity of a regular $m$-gon $\sfE$ can be found in \cite[Table 5.1]{ransford95}. It is stated there that
\begin{equation}
    \Capacity(\sfE) = \frac{\Gamma(1/m)}{2^{1+2/m}\pi^{1/2}\Gamma(1/2+1/m)}\cdot \text{side length}(\sfE).
    \label{eq:ngon_capacity}
\end{equation}
We use this formula together with Tang's algorithm to compute the Widom factors corresponding to different $m$-gons. If the corners are located at
\[\left\{\exp\left(\frac{2\pi i k}{m}\right)\Big\vert~ k = 0,1,\dotsc,m-1\right\},\]
then the set is invariant under rotations by an angle of $2\pi /m$ and hence Lemma \ref{lem:symmetry} implies that
\begin{equation}
	T_{nm+l}^{\sfE}(z) = z^lQ_n^{\sfE}(z^m), \quad l = 0,1,\dotsc,m-1,
	\label{eq:mgon_representation}
\end{equation}
where $Q_n^{\sfE}$ is a monic polynomial of degree $n$, depending on $m$, whose coefficients are all real. From \eqref{eq:mgon_representation} it follows that $n$ basis elements are needed in Tang's algorithm to compute $T_{nm+l}^{\sfE}$. We use the following notation:
\begin{itemize}
    \item $\sfE_{\Delta}$ - the equilateral triangle, $m=3$,
    \item $\sfE_{\square}$ - the square, $m=4$,
    \item $\sfE_{\pentagon}$ - the pentagon, $m = 5$,
    \item $\sfE_{\hexagon}$ - the hexagon, $m = 6$.
\end{itemize}
The corresponding Widom factors are illustrated in Table \ref{tab:mgons} and Figures \ref{fig-triangle}--\ref{fig-hexagon} and will be further discussed in Section \ref{sec:discussion}. 
\begin{table}[h]
    \centering
    \begin{tabular}{l c c c c} 
        & $\cW_{n}(\sfE_\Delta)$ & $\cW_{n}(\sfE_\square)$& $\cW_{n}(\sfE_{\pentagon})$ & $\cW_{n}(\sfE_{\hexagon})$ \\ [0.5ex] 
		\hline        
$n = 5$&\red{1.30901051}&\red{1.27841716}&\red{1.21350890}&\red{1.51420435}\\
$n = 10$&\red{1.14268975}&\red{1.12981144}&\red{1.14236706}&\red{1.17363458}\\
$n = 25$&\red{1.05488942}&\red{1.04969579}&\red{1.05544736}&\red{1.06322465}\\
$n = 50$&\red{1.02708221}&\red{1.02449420}&\red{1.02724022}&\red{1.03142381}\\
$n = 90$&\red{1.01495704}&\red{1.01352749}&\red{1.01502916}&\red{1.01733310}\\
$n = 120$&\red{1.01119706}&\red{1.01012748}&\red{1.01124879}&\red{1.01297657}\\
    \end{tabular}
    \vspace{0.5cm}
    \caption{Widom factors corresponding to regular polygons, computed with an accuracy of \red{$10^{-10}$} using Tang's algorithm.}
    \label{tab:mgons}
\end{table}

\begin{figure}[h]
\centering
	\begin{subfigure}{.49\textwidth}
		\includegraphics[width=\linewidth]{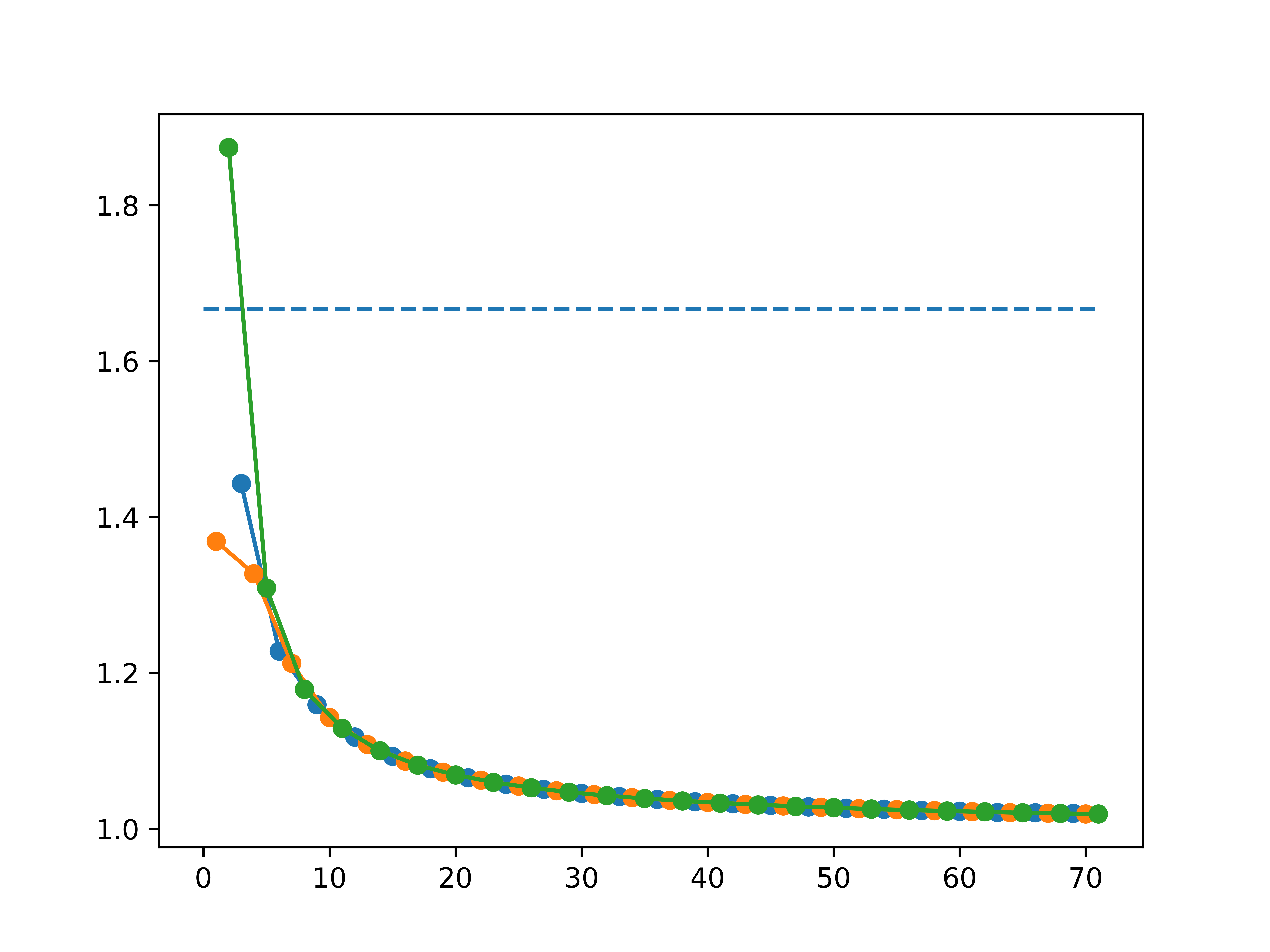}
	    \caption{\red{Widom factors of an equilateral triangle.}}
	    \label{fig-triangle}
	\end{subfigure}
	\begin{subfigure}{.49\textwidth}
		\includegraphics[width=\linewidth]{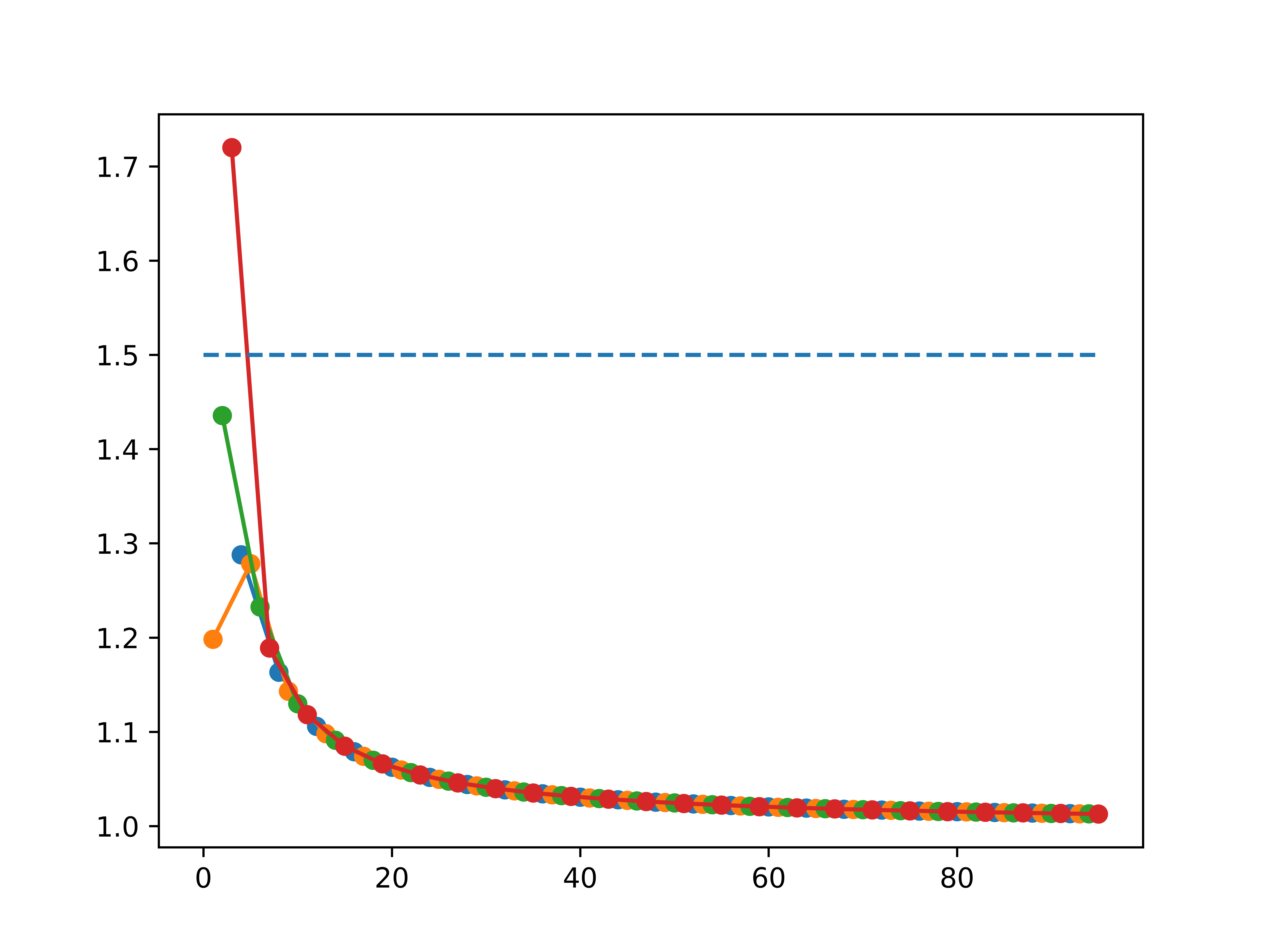}
	    \caption{\red{Widom factors of a square.}}
	    \label{fig-square}
	\end{subfigure}
	\begin{subfigure}{.49\textwidth}
		\includegraphics[width=\linewidth]{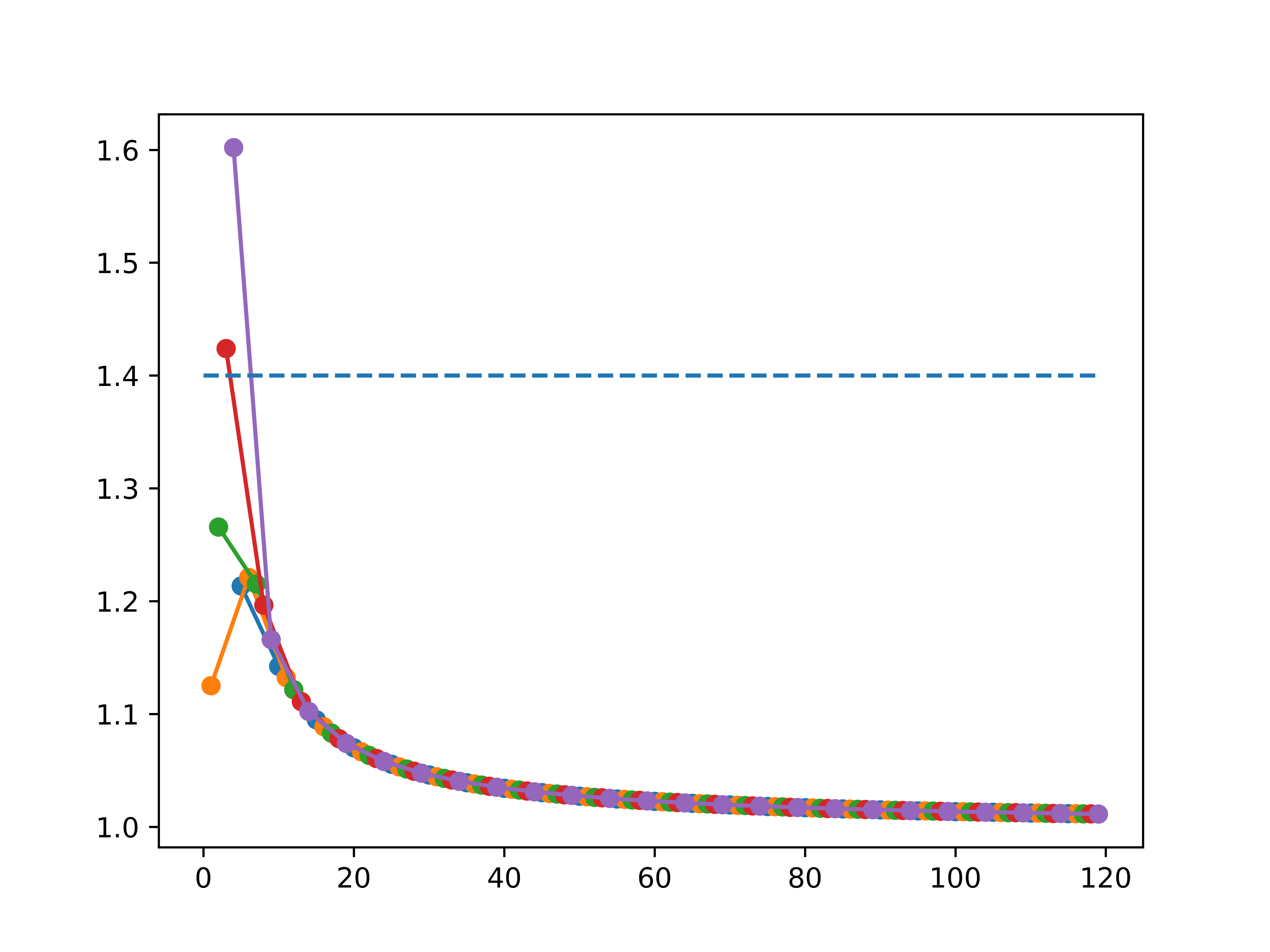}
	    \caption{\red{Widom factors of a regular pentagon.}}
	    \label{fig-pentagon}
	\end{subfigure}
	\begin{subfigure}{.49\textwidth}
		\includegraphics[width=\linewidth]{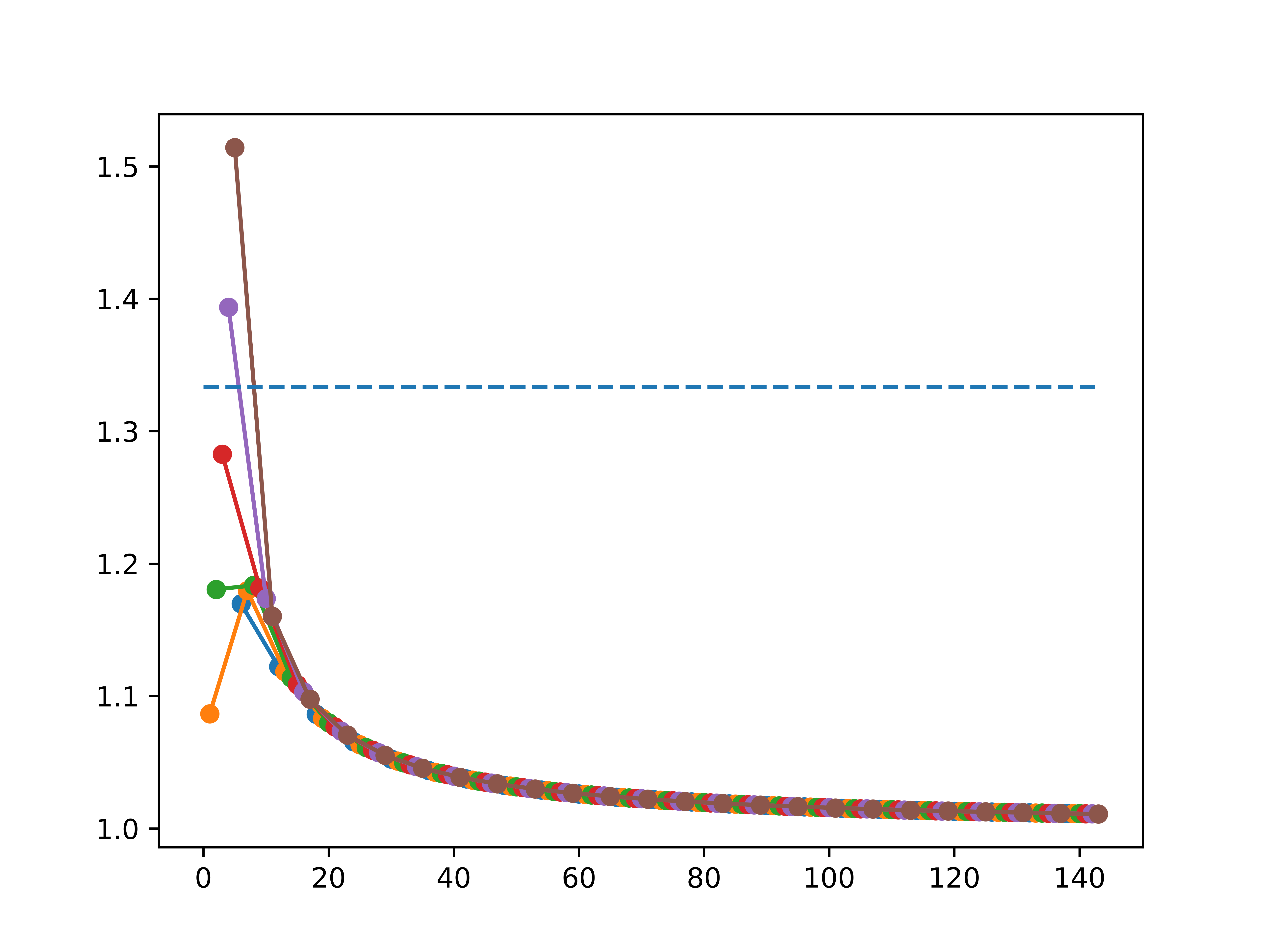}
	    \caption{\red{Widom factors of a regular hexagon.}}
    	\label{fig-hexagon}
	\end{subfigure}
	\caption{\red{The difference between $\cW_{n}$ and $1$ for different $m$-gons.} The dotted blue line represents the value \red{$\frac{m+2}{m}-1$} which relates to the Faber polynomials. \red{The Tang accuracy is $10^{-7}$.}}
\end{figure}
\subsubsection{Hypocycloids}
Examples of sets which are not quasi-circles are sets which contain cusps on their boundary. With an ``outward pointing cusp'' we simply mean that the exterior angle at such a point is $2\pi$. For a pictorial representation the reader can consult Figures \ref{fig-zeros-del}-\ref{fig-zeros-exo} \red{illustrating} $m$-cusped hypocycloids. These are the Jordan curves $\{\sfH_m\}$ defined via
\begin{equation}
    \sfH_m\coloneqq\left\{e^{i\theta}+\frac{e^{-i(m-1)\theta}}{m-1}: \theta\in [0,2\pi)\right\}.
    \label{eq:hypocycloid}
\end{equation}
It is easily seen that if $\Phi$ is the external conformal map from the unbounded component of $\C\setminus \sfH_m$ to $\{z:|z|>1\}$ satisfying $\Phi(z) = \Capacity(\sfH_m)^{-1}z+O(1)$ as $z\rightarrow \infty$ then
\[\Phi^{-1}(z) = z+\frac{z^{-(m-1)}}{m-1}\]
and hence $\Capacity(\sfH_m) = 1$ for any $m$.

The corresponding Faber polynomials have been studied in \cite{he96,he-saff94}. Particular focus has been directed toward the corresponding zero distributions which are confined to straight lines. 

The sets $\sfH_m$ are clearly invariant under rotations by $e^{2\pi i/m}$ and therefore Lemma \ref{lem:symmetry} implies that
\[T_{nm+l}^{\sfH_{m}}(z) = z^lQ_n^{\sfH_m}(z^m),\]
where $Q_n^{\sfH_m}$ is a monic polynomial with real coefficients. The corresponding Widom factors are illustrated in Table \ref{tab:hypocycloid} and Figures \ref{fig-hyp-3}--\ref{fig-hyp-6} and will be further discussed in \red{Section} \ref{sec:discussion}. 

\begin{table}[h]
    \centering
    \begin{tabular}{l c c c c} 
        & $\cW_{n}(\sfH_{3})$ & $\cW_{n}(\sfH_{4})$& $\cW_{n}(\sfH_{5})$ & $\cW_{n}(\sfH_{6})$ \\ [0.5ex] 
        \hline
$n = 5$&\red{1.69594045}&\red{1.52124467}&\red{1.64453125}&\red{2.48832000}\\
$n = 10$&\red{1.43149779}&\red{1.40782752}&\red{1.40910966}&\red{1.47443526}\\
$n = 25$&\red{1.25181361}&\red{1.24037027}&\red{1.24939099}&\red{1.26640101}\\
$n = 50$&\red{1.17181774}&\red{1.16257852}&\red{1.16744376}&\red{1.17588712}\\
$n = 90$&\red{1.12543411}&\red{1.11836230}&\red{1.12140197}&\red{1.12692589}\\
\red{$n = 120$}&\red{1.10776251}&\red{1.10161806}&\red{1.10410341}&\red{1.10869246}\\
        
    \end{tabular}
    \vspace{0.5cm}
    \caption{Widom factors corresponding to $m$-cusped Hypocycloids, computed with an accuracy of \red{$10^{-10}$} using Tang's algorithm.}
    \label{tab:hypocycloid}
\end{table}
\begin{figure}[h]
\centering
\begin{subfigure}{.49\textwidth}
    \centering
	    \includegraphics[width=\linewidth]{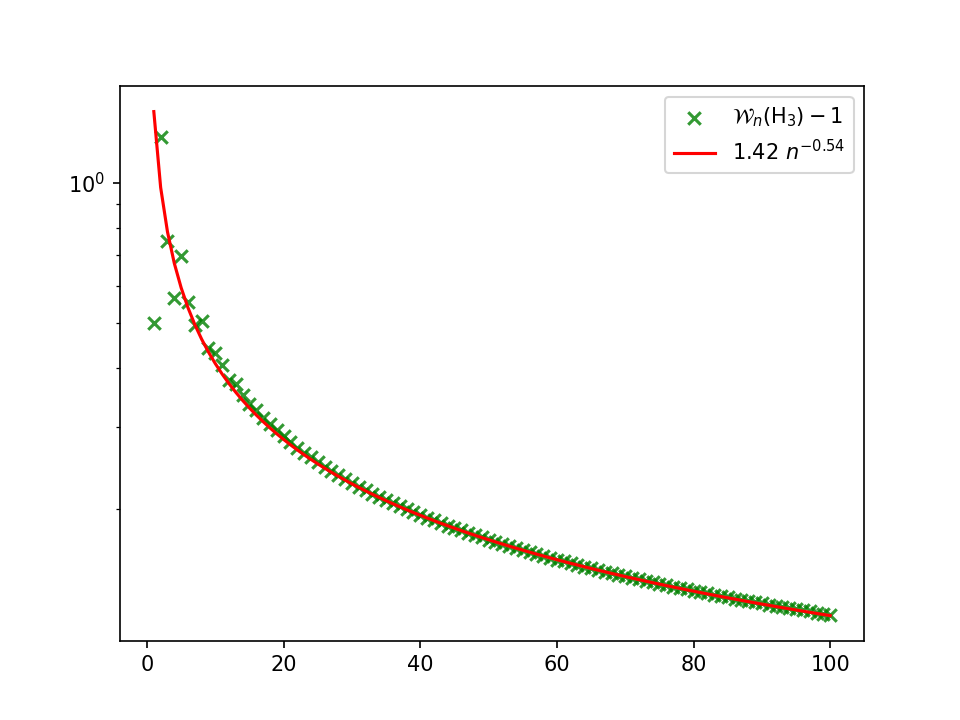}
    \caption{\red{Widom factors of $\sfH_3$.}}
    \label{fig-hyp-3}
\end{subfigure}%
\begin{subfigure}{.49\textwidth}
    \centering
    \includegraphics[width=\linewidth]{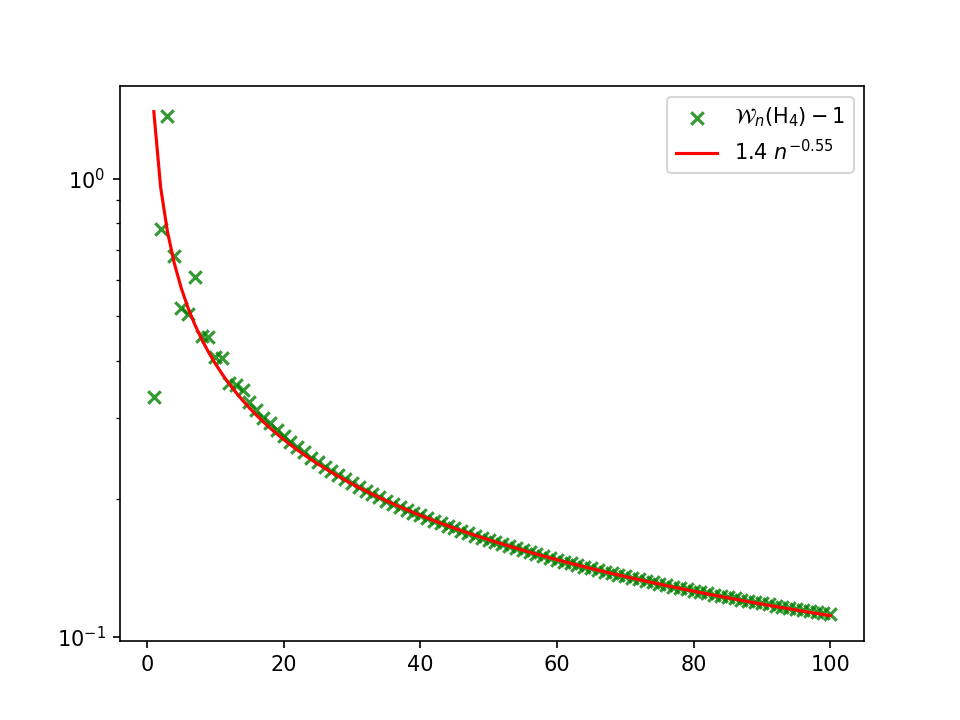}
    \caption{\red{Widom factors of $\sfH_4$.}}
    \label{fig-hyp-4}
\end{subfigure}
\begin{subfigure}{.49\textwidth}
    \centering
    \includegraphics[width=\linewidth]{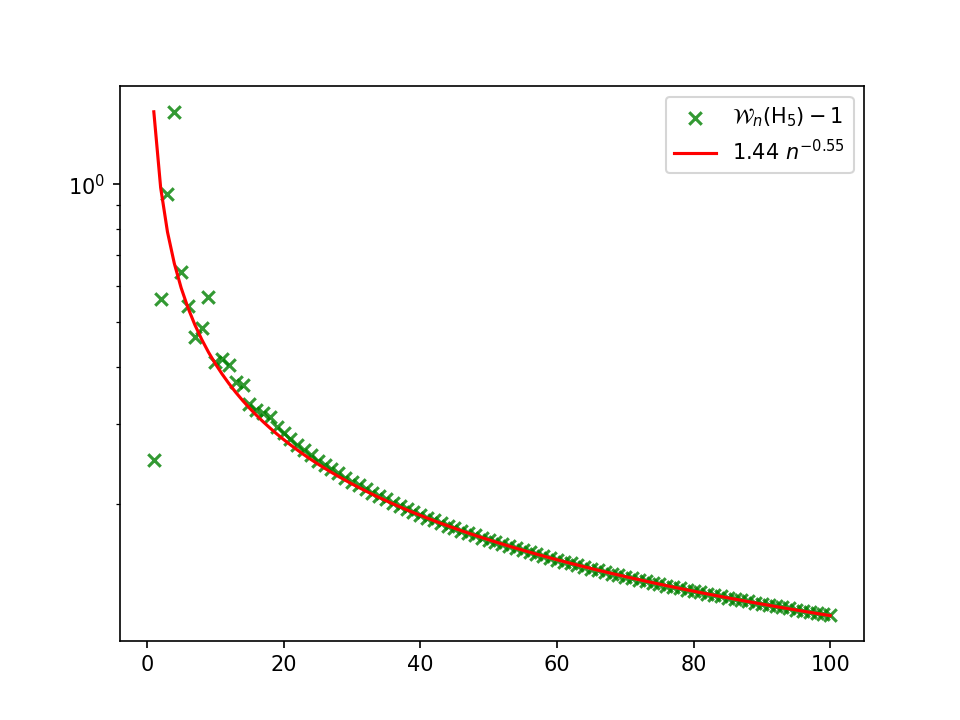}
    \caption{\red{Widom factors of $\sfH_5$.}}
    \label{fig-hyp-5}
\end{subfigure}%
\begin{subfigure}{.49\textwidth}
    \centering
    \includegraphics[width=\linewidth]{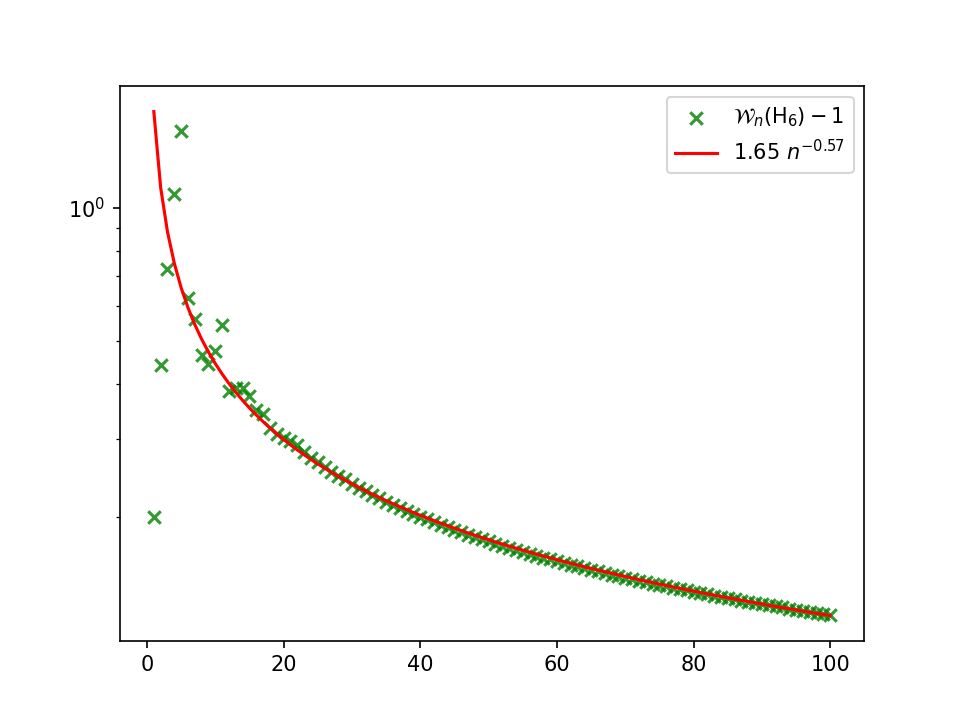}
    \caption{\red{Widom factors of $\sfH_6$.}}
    \label{fig-hyp-6}
\end{subfigure}%
\caption{\red{The difference between $\cW_n$ and $1$ on different $m$-cusped Hypocycloids. The Tang accuracy is $10^{-7}.$}}
\end{figure}	

\subsubsection{Circular lunes}
As a final example of the computation of Widom factors we consider the case of circular lunes, see Figures \ref{fig-zeros-lune-1-2} and \ref{fig-zeros-lune-3-2}. Given $\alpha \in (0,2]$, we let
\begin{equation}
    \label{eq:lune_def}
    \sfC_\alpha \red{\coloneqq} \left\{\alpha\frac{1+\left(\frac{w-1}{w+1}\right)^\alpha}{1-\left(\frac{w-1}{w+1}\right)^\alpha}: |w| = 1\right\}
\end{equation}
with vertices at $\pm\alpha$ and exterior angle $\alpha\pi$. \red{Note that $\Capacity(\sfC_\alpha) =1$.} The structure of such sets \red{depends} on the value of the parameter $\alpha$. \red{$\sfC_\alpha$ is non-convex if $\alpha \in (0,1)$ and convex if $\alpha \in (1,2)$}. The extreme cases are $\sfC_1 = \T$ and $\sfC_2 = [-2,2]$. We will consider two parameter values, namely $\alpha = \frac{1}{2}$ and $\alpha = \frac{3}{2}$ as they \red{have different convexity properties}. \red{Regardless} of the parameter value of $\alpha$, the set $\sfC_\alpha$ is symmetric with respect to both axes. From Lemma \ref{lem:symmetry} we conclude that
\[
T_{2n+l}^{\sfC_\alpha} = z^lQ_n^{\alpha}(z^2)
\]
where $Q_n^{\alpha}$ is a monic polynomial of degree $n$ with real coefficients. The results of the computations using Tang's algorithm are illustrated in Table \ref{tab:circular-lune} and Figures \ref{fig-lune-1-2} and \ref{fig-lune-3-2}.

\begin{table}[h]
    \centering
    \begin{tabular}{l c c} 

        & $\cW_{n}(\sfC_{1/2})$ & $\cW_{n}(\sfC_{3/2})$ \\ [0.5ex] 
		\hline
\red{$n = 5$}&\red{1.10286958}&\red{1.12569879}\\
$n = 10$&\red{1.03696888}&\red{1.06185388}\\
$n = 25$&\red{1.03405451}&\red{1.02444481}\\
\red{$n = 50$}&\red{1.01442556}&\red{1.01215983}\\
\red{$n = 90$}&\red{1.00936347}&\red{1.00673877}\\
\red{$n = 120$}&\red{1.00749065}&\red{1.00505004}
        
    \end{tabular}
    \vspace{0.5cm}
    \caption{Widom factors corresponding to circular lunes, computed with an accuracy of \red{$10^{-10}$} using Tang's algorithm.}
    \label{tab:circular-lune}
\end{table}

\begin{figure}[h]
\centering
\begin{subfigure}{.45\textwidth}
    \centering
    \includegraphics[width=\linewidth]{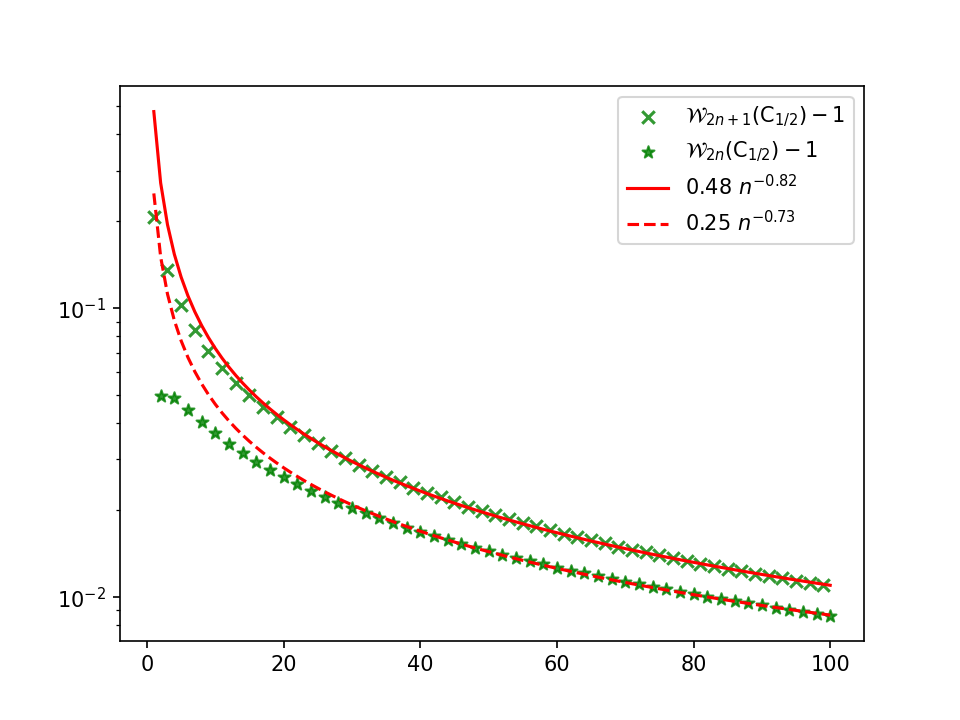}
    \caption{\red{Widom factors of $\sfC_{1/2}$.}}
    \label{fig-lune-1-2}
\end{subfigure}%
\begin{subfigure}{.45\textwidth}
    \centering
    \includegraphics[width=\linewidth]{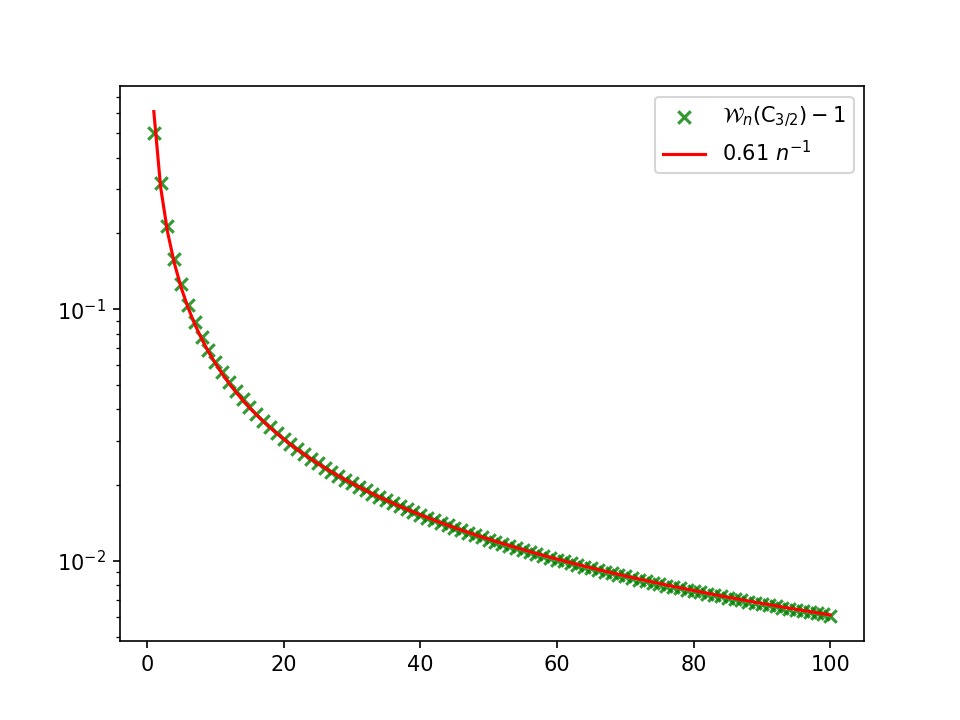}
    \caption{\red{Widom factors of $\sfC_{3/2}$.}}
    \label{fig-lune-3-2}
\end{subfigure}
\caption{\red{The difference betweeen $\cW_n$ and $1$ on two different circular lunes. The Tang accuracy is $10^{-7}.$}}
\end{figure}

\subsection{The Faber connection}
Our initial \red{motivation} for computing Chebyshev polynomials originated in \red{obtaining a better understanding of} the \red{behavior of their} zeros. \red{One case that we wished to better understand concerned} Chebyshev polynomials on \red{\textit{equipotential}} curves. More precisely, if $\Phi$ is the exterior conformal map of a connected set $\sfE$, we investigated Chebyshev polynomials on the level curves
\[\sfE^r \coloneqq \{z:|\Phi(z)| = r\}\]
and found that the corresponding zeros of $T_n^{\sfE^r}$ seemed to converge \red{as $r$ increases}. By simultaneously plotting the zeros of the Faber polynomials, the picture became quite clear. The zeros of $T_n^{\sfE^r}$, as $r$ increased, appeared to \red{converge toward} the zeros of the corresponding Faber polynomials. \red{We now investigate} this possible relation numerically for lemniscates, hypocycloids and circular lunes.

\subsubsection{Lemniscates}
For given parameters $r>0$ and $m\in \N$, we define a family of compact lemniscatic sets via
\begin{equation}
    \sfL_{m}^r \coloneqq \Big\{z:|z^m-1| = r^m\Big\}.
\end{equation}
A pictorial representation of such sets can be found in Figures \ref{fig-zeros-bernoulli} and \ref{fig-zeros-peanut}. From \eqref{eq:capacity_preimage} we \red{observe} that $\Capacity(\sfL_{m}^{r}) = r$ and since the polynomial $(z^m-1)^n$ saturates the lower bound in \eqref{eq:szego_inequality} we see that $T_{nm}^{\sfL_{m}^{r}}(z) = (z^m-1)^n$. For the remaining degrees we apply Lemma \ref{lem:symmetry} to \red{conclude}
\begin{equation}
    T_{nm+l}^{\sfL_m^r}(z) = z^{l}Q_{n}^{\sfL_m^r}(z^m),
    \label{eq:cheb_lemniscate}
\end{equation}
where $Q_n^{\sfL_m^r}$ is a monic polynomial whose coefficients are all real. The parameter $r$ determines three separate regimes of sets.
\begin{itemize}
    \item If $r>1$ then $\sfL_m^r$ is the closure of an analytic Jordan domain.
    \item If $r = 1$, we write $\sfL_m^{1} = \sfL_m$ and in this case $\sfL_m$ is connected, but its interior is not.
    \item If $0<r<1$ then $\sfL_m^{r}$ consists of $m$ components.
\end{itemize}
Since $T_{mm}^{\sfL_m^r}(z) = (z^m-1)^n$, we see that $\cW_{nm}(\sfL_m^{r}) = 1$ for any $n, m$ and $r$. The question is what the asymptotic behavior is for the remaining sequences of degrees. For $r>1$ it follows immediately from \eqref{eq:faber_norms} that $\cW_{n}(\sfL_m^r)\rightarrow 1$\red{,} as $n\rightarrow \infty$ since the boundary is an analytic Jordan curve. If $0<r<1$ then it is known that $\limsup_{n\rightarrow \infty}\cW_{n}(\sfL_m^r)>1$, see \red{\cite[Theorem 2]{totik14}}. The remaining case, when $r = 1$, is handled by \cite[Corollary 2]{bergman-rubin24} where it is shown that $\cW_{n}(\sfL_m^1)\rightarrow 1$ as $n\rightarrow \infty$. 

In the following discussion we limit ourselves to the case $m=2$ and write $\sfL^r=\sfL_2^r$ and $\sfL = \sfL^1$. It should be stressed that analogous considerations are possible for any $m$. The set $\sfL$ is the classical Bernoulli lemniscate.

The conformal map\red{ping from} $\overline{\C}\setminus \sfL$ to $\{z:|z|>1\}$ with $\Phi(\infty) = \infty$ is given by 
\[\Phi(z) = \sqrt{z^2-1},\]
where the branch is chosen such that $\Phi(z) =z+O(1)$ at infinity. It follows from \eqref{eq:faber_definition} that
\[F_{2n}^{\sfL}(z) = (z^2-1)^n\]
and hence $T_{2n}^{\sfL} = F_{2n}^{\sfL}$ for any value of $n$. We investigate if there is a possible relation between $F_{2n+1}^{\sfL}$ and $T_{2n+1}^{\sfL^r}$ as well.

It is possible to determine the Chebyshev polynomial of degree $3$ corresponding to $\sfL^r$ explicitly by solving the system of equations
\[\begin{cases}
    \frac{\partial }{\partial \theta}|z(z^2+a)|^2 = 0 \\
    \frac{\partial}{\partial a}|z(z^2+a)|^2 = 0
\end{cases}\]
with $z = \sqrt{r^2e^{i\theta}+1}$. For $r\geq 1$ a computation shows that the solution is given by
\begin{equation}
        T_{3}^{\sfL^r}(z) = z\left(z^2-\frac{1}{5}\left(4-r^4+\sqrt{1+7r^4+r^8}\right)\right).
        \label{eq:cheb_lemniscate_deg_three}
\end{equation}
On the other hand, using the Taylor expansion of $\Phi$ it is easy to see that
\[F_3^{\sfL}(z) = z\left(z^2-\frac{3}{2}\right)\]
and hence we gather from \eqref{eq:cheb_lemniscate_deg_three} that \red{$T_{3}^{\sfL^r}(z) \rightarrow  F_{3}^{\sfL}(z)$ as $r\rightarrow \infty$}, uniformly on compact subsets of the complex plane. The question is whether this should be considered an anomaly or a potential link between Chebyshev polynomials and Faber polynomials. The natural procedure is of course to consider further examples. We do so numerically using Tang's algorithm.

We define a norm on polynomials in the following way. If $P(z) = \sum_{k=0}^{n}a_kz^k$ then $\|\cdot\|_\infty$ is given by
\begin{equation}
    \|P\|_\infty= \max_{0\leq k\leq n}|a_k|.
    \label{eq:polynomial_norm}
\end{equation}
Our aim with this is to display the difference 
\[\|T_{2n+1}^{\sfL^r}-F_{2n+1}^{\sfL}\|_{\infty}\]
and illustrate that this appears to tend to $0$ with \red{$r\rightarrow \infty$}, see Figure \ref{fig-faber}.

\subsubsection{Hypocycloids}
We continue the considerations concerning a possible relation between Faber polynomials and Chebyshev polynomials on level curves corresponding to conformal maps. We therefore return to the family of $m$-cusped hypocycloids $\{\sfH_m\}$. The Faber polynomials $F_{n}^{\sfH_m}$ can be computed using \cite[Proposition 2.3]{he-saff94}.

For $r>1$, we let
\[\sfH_{m}^r\coloneqq\left\{re^{i\theta}+\frac{(re^{i\theta})^{-(m-1)}}{m-1}: \theta\in [0,2\pi)\right\}.\]
If $\Phi$ denotes the external conformal map from the unbounded component of $\C\setminus \sfH_m$ to $\{z:|z|>1\}$ with $\Phi(z) = z+O(1)$ as $z\rightarrow \infty$ then $\sfH_{m}^{r}$ is the analytic Jordan curve where $\Phi$ attains modulus $r$. With the intention of considering the possibility that 
\[T_{n}^{\sfH_m^r}\rightarrow F_n^{\sfH_m}\]
as $r\rightarrow \infty$, we compute $\|F_n^{\sfH_m}-T_{n}^{\sfH_m^r}\|_\infty$ for $m = 5$, and illustrate it in Figure \ref{fig-faber}.

\subsubsection{Circular lunes}
We end the considerations of comparing Chebyshev polynomials to Faber polynomials by considering the case of circular lunes. As an example, we consider the case where $\alpha = 1/2$. In this case the canonical external conformal map $\Phi$ from the unbounded component of $\C\setminus \sfC_{1/2}$ to the exterior of the closed unit disk has the simple form
\[\Phi(z) = \frac{z^2+1/4}{z}.\]
We therefore find that
\[F_{2n+l}^{\sfC_{1/2}}(z) = \sum_{k=0}^{n}{{2n+l}\choose{k}}4^{-k}z^{2n+l-2k}.\]
For $r\geq1$, we let
\[\sfC_\alpha^r = \{z: |\Phi(z)| = r\} = \left\{\alpha\frac{1+\left(\frac{w-1}{w+1}\right)^\alpha}{1-\left(\frac{w-1}{w+1}\right)^\alpha}: |w| = r\right\}.\]
The computed difference $\|F_{n}^{\sfC_{1/2}}-T_{n}^{\sfC_{1/2}^r}\|_{\infty}$ for $n=11$ is illustrated in Figure \ref{fig-faber}.

\begin{figure}[h!]
\centering
\includegraphics[width=0.8\textwidth]{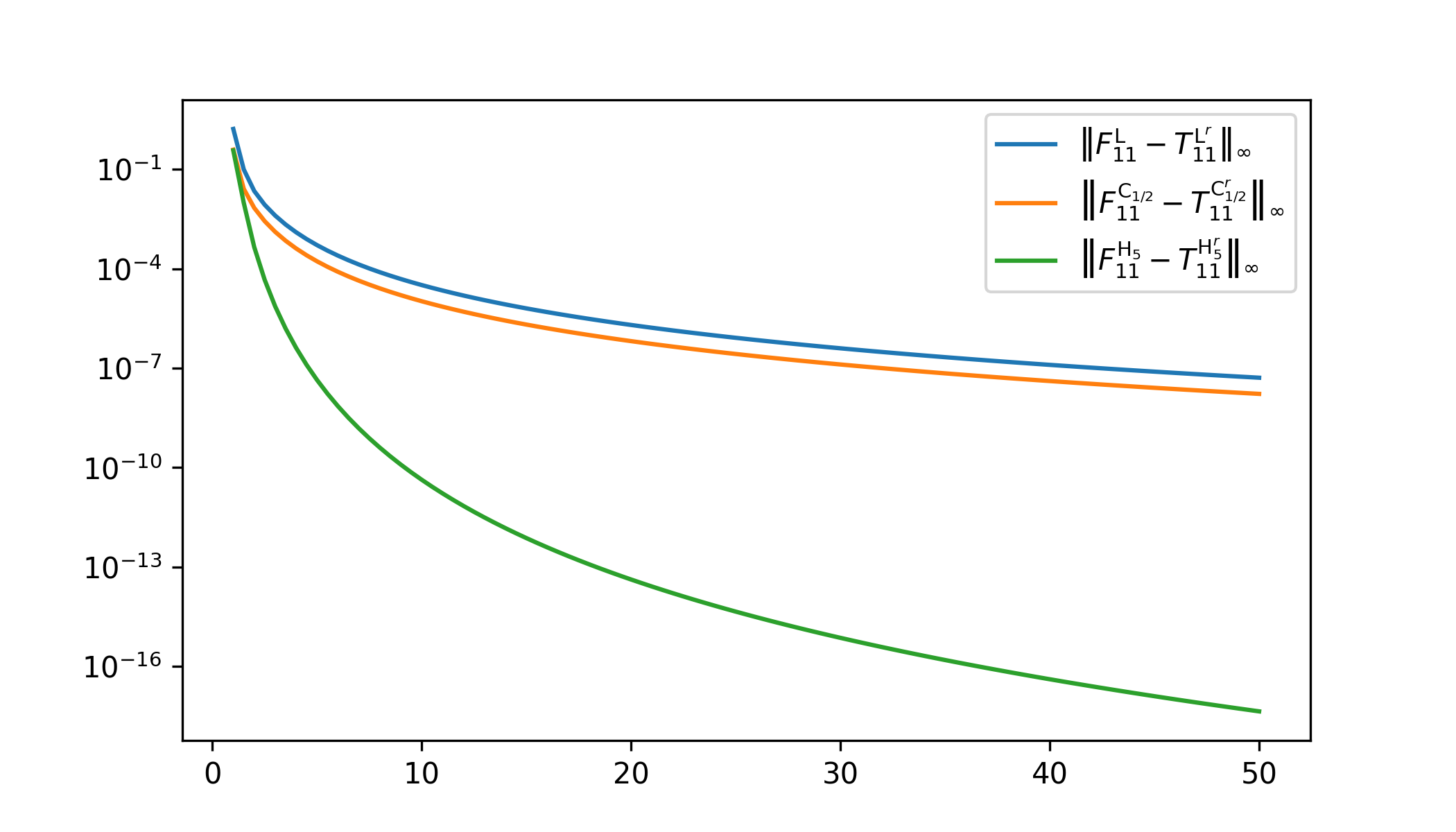}
\caption{\red{The figures illustrate $\|F_{n}^{\sfE}-T_{n}^{\sfE_r}\|_\infty$ as functions of $r>1$ for different $\sfE$ and $n = 11$ using Tang accuracy of $10^{-40}$. For $\sfL^r$ and $\sfC_{1/2}^r$ the estimated convergence rate is around $O(r^{-4})$. For $\sfH_5^{r}$ the estimated convergence rate is $O(r^{-10})$.}}
\label{fig-faber}
\end{figure}

\subsection{Zero distribution}
Our final computations \red{concern} zeros of $T_n^{\sfE}$ for different compact sets $\sfE$. In Figures \ref{fig-zeros-tri}-\ref{fig-zeros-hexagon} the zeros corresponding to $\sfE_\Delta$, $\sfE_\square$, $\sfE_{\pentagon}$ and $\sfE_{\hexagon}$ are \red{displayed}. In Figures \ref{fig-zeros-lune-1-2} and \ref{fig-zeros-lune-3-2} the zeros of $T_n^{\sfC_{\alpha}}$ are illustrated for $\alpha = 1/2$ and $\alpha = 3/2$. In Figures \ref{fig-zeros-del}--\ref{fig-zeros-exo} the zeros of $T_{n}^{\sfH_m}$ are \red{illustrated} for different values of $m$ and $n$. In Figures \ref{fig-zeros-bernoulli} and \ref{fig-zeros-peanut} the zeros corresponding to $T_n^{\sfL}$ and $T_n^{\sfL^r}$ are \red{illustrated}. To complement the plots of the zeros of $T_n^{\sfL^r}$ we also plot the zeros of Chebyshev polynomials corresponding to 
\[\red{\sfA^r\coloneqq\{z: |z^3+z+1| = r\}, \quad \text{and}\quad \sfB^r\coloneqq\{z: |z^4-z^2| = \red{r}\}.}\]
\red{We introduce the constants
\begin{equation}
	r_3 =\sqrt{31/27},\quad \text{ and }\quad r_4 = 1/4
\end{equation}
since $\sfA^{r_3}$ and $\sfB^{r_4}$ correspond to the critical case where the lemniscates have self-intersections.}
The corresponding zero plots are given in Figures \ref{fig-zeros-gen-lem-1} and \ref{fig-zeros-gen-lem-2}.

\begin{figure}[h!]
	\centering
	\begin{subfigure}{.49\textwidth}
		\centering
    \includegraphics[width=\linewidth]{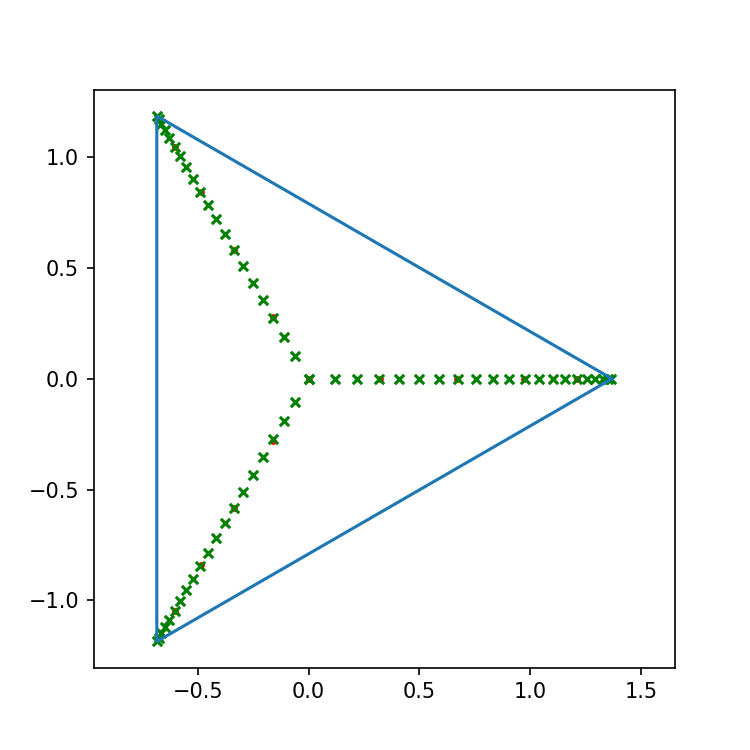}
    \caption{$T_{16}^{\sfE_\Delta}$ and $T_{62}^{\sfE_\Delta}$.}
    \label{fig-zeros-tri}
	\end{subfigure}%
	\begin{subfigure}{.49\textwidth}
		\centering
    \includegraphics[width=\linewidth]{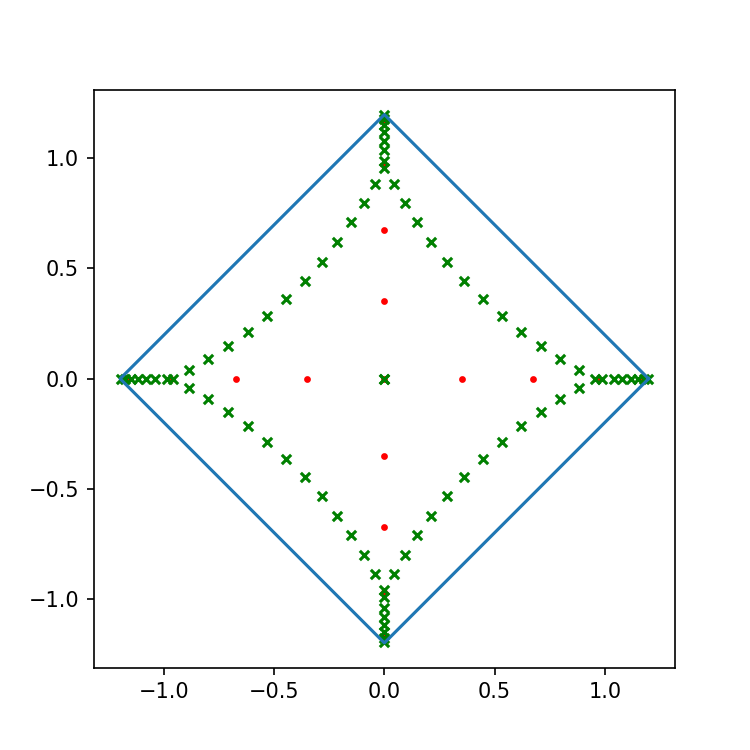}
    \caption{$T_{17}^{\sfE_\square}$ and $T_{82}^{\sfE_\square}$.}
    \label{fig-zeros-square}
	\end{subfigure}
	\begin{subfigure}{.49\textwidth}
		\centering
    \includegraphics[width=\linewidth]{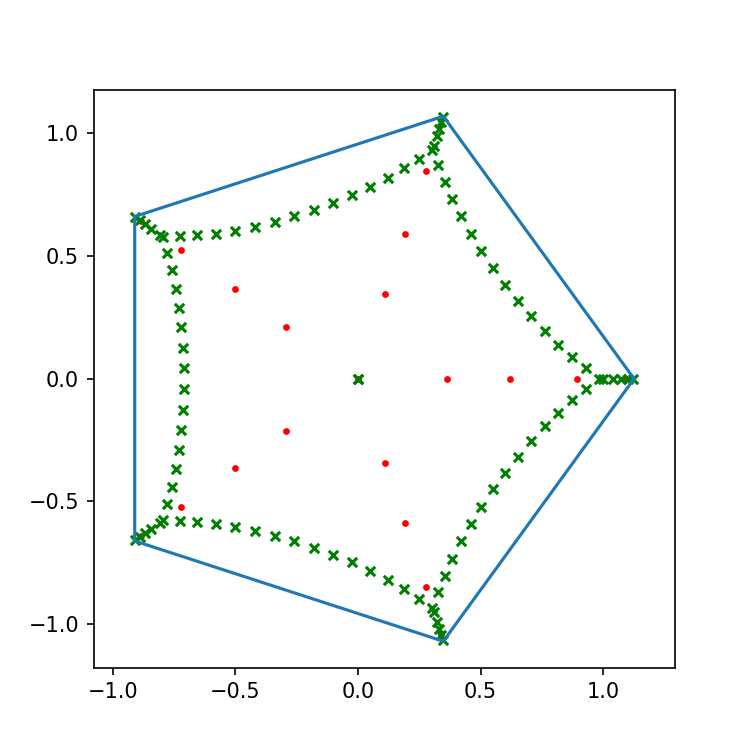}
    \caption{$T_{21}^{\sfE_{\pentagon}}$ and $T_{102}^{\sfE_{\pentagon}}$.}
    \label{fig-zeros-pentagon}
	\end{subfigure}%
	\begin{subfigure}{.49\textwidth}
		\centering
    \includegraphics[width=\linewidth]{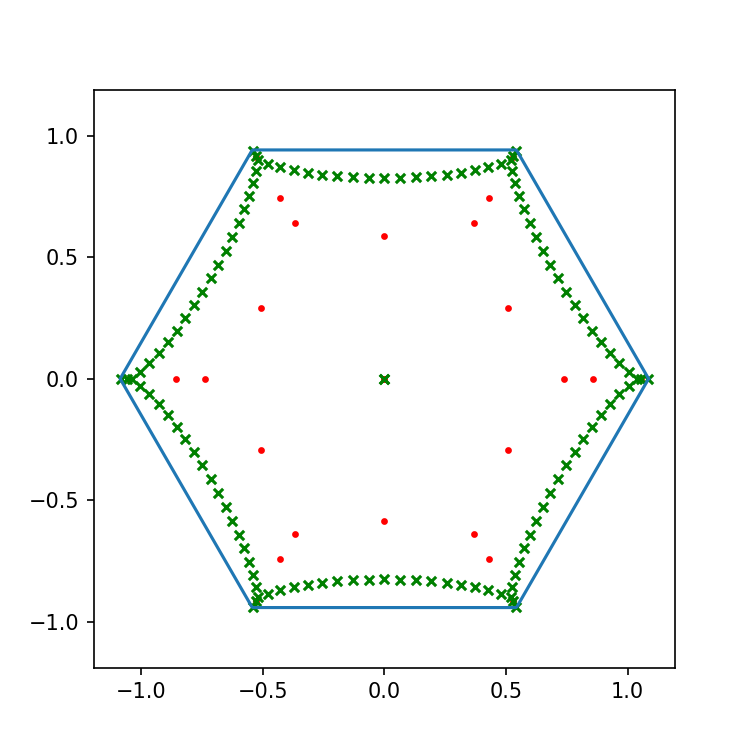}
    \caption{$T_{27}^{\sfE_{\hexagon}}$ and $T_{122}^{\sfE_{\hexagon}}$.}
    \label{fig-zeros-hexagon}
    \end{subfigure}
		\caption{Zeros of Chebyshev polynomials corresponding to $m$-gons. }
\end{figure}
\begin{figure}[h!]
		\begin{subfigure}{.49\textwidth}
		 \centering
    \includegraphics[width=\linewidth]{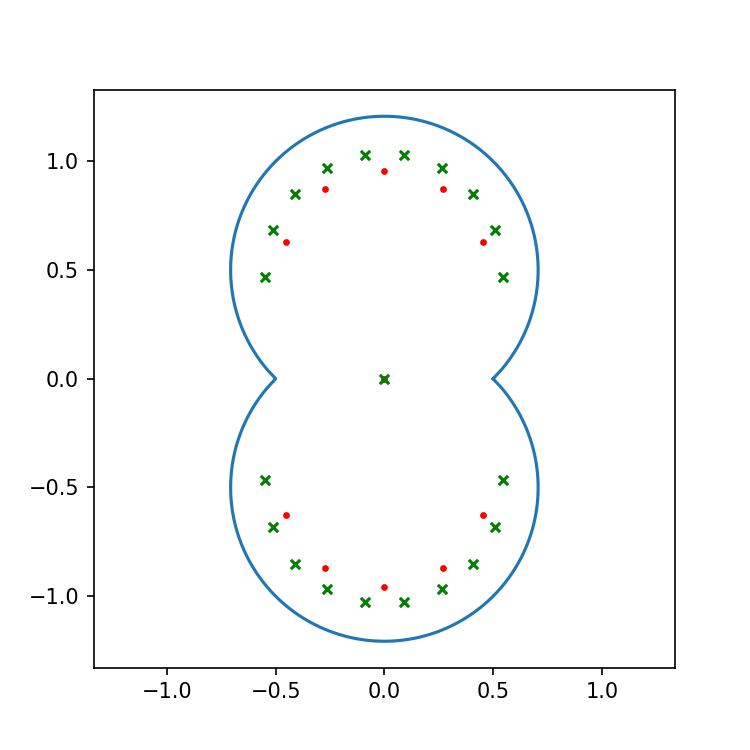}
    \caption{$T_{11}^{\sfC_{1/2}}$ and $T_{21}^{\sfC_{1/2}}$.}
    \label{fig-zeros-lune-1-2}
	\end{subfigure}%
	\begin{subfigure}{.49\textwidth}
		\centering
    \includegraphics[width=\linewidth]{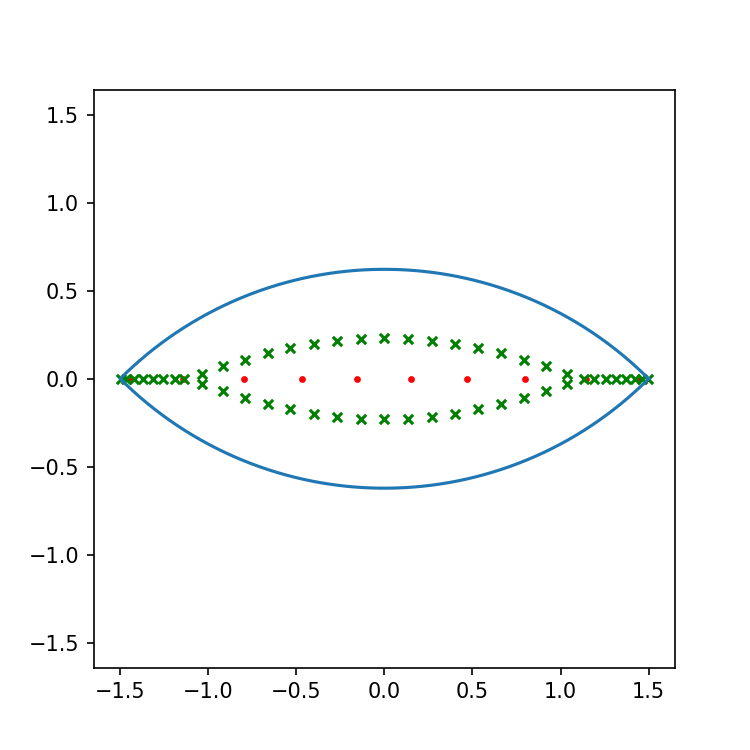}
    \caption{$T_{10}^{\sfC_{3/2}}$ and $T_{50}^{\sfC_{3/2}}$.}
    \label{fig-zeros-lune-3-2}
	\end{subfigure}
			\caption{Zeros of Chebyshev polynomials corresponding to circular lunes. }
\end{figure}

\begin{figure}[h!]
	\begin{subfigure}{.49\textwidth}
		\centering
    \includegraphics[width=\linewidth]{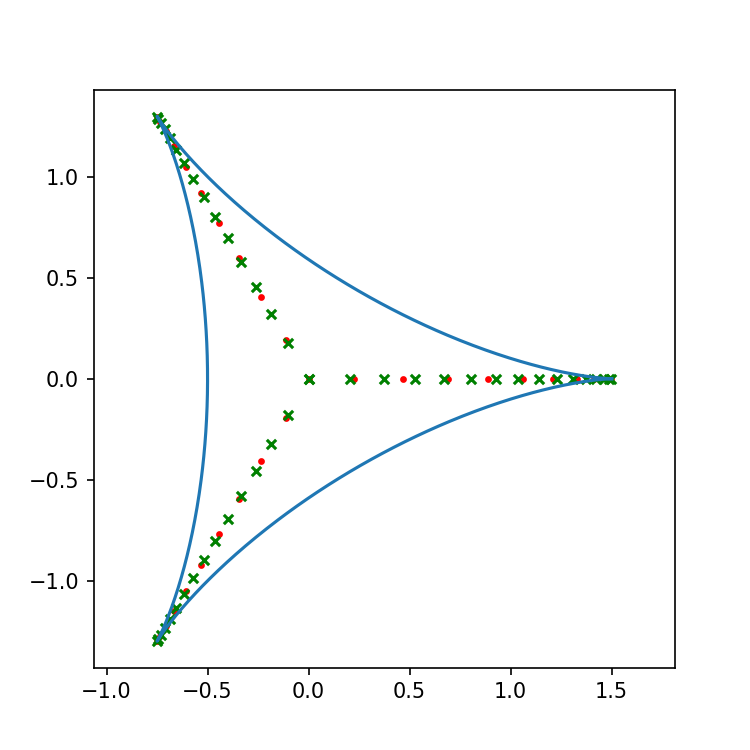}
    \caption{$T_{31}^{\sfH_3}$ and $T_{47}^{\sfH_3}$.}
    \label{fig-zeros-del}
	\end{subfigure}%
	\begin{subfigure}{.49\textwidth}
		 \centering
    \includegraphics[width=\linewidth]{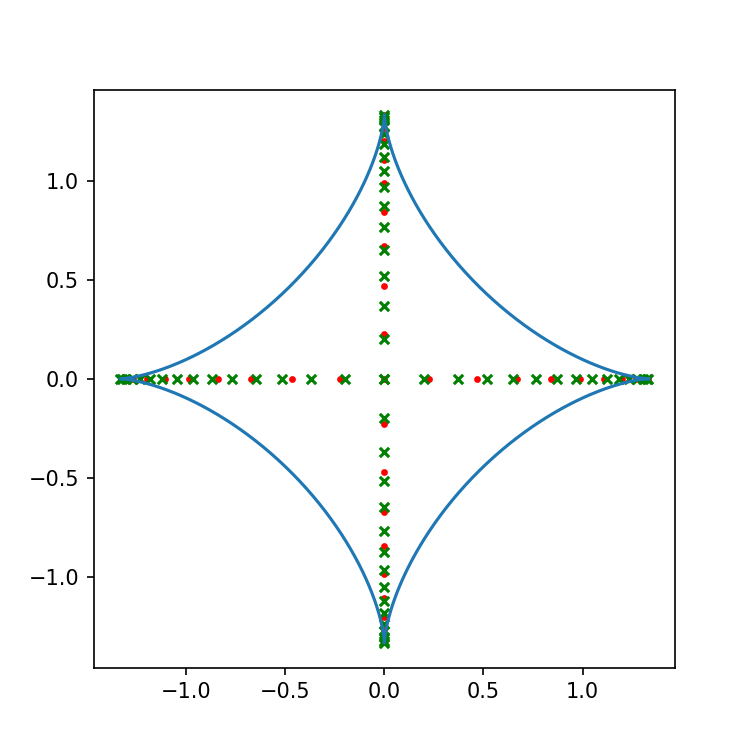}
    \caption{$T_{41}^{\sfH_4}$ and $T_{62}^{\sfH_4}$.}
    \label{fig-zeros-ast}
	\end{subfigure}
	\begin{subfigure}{.49\textwidth}
		\centering
    \includegraphics[width=\linewidth]{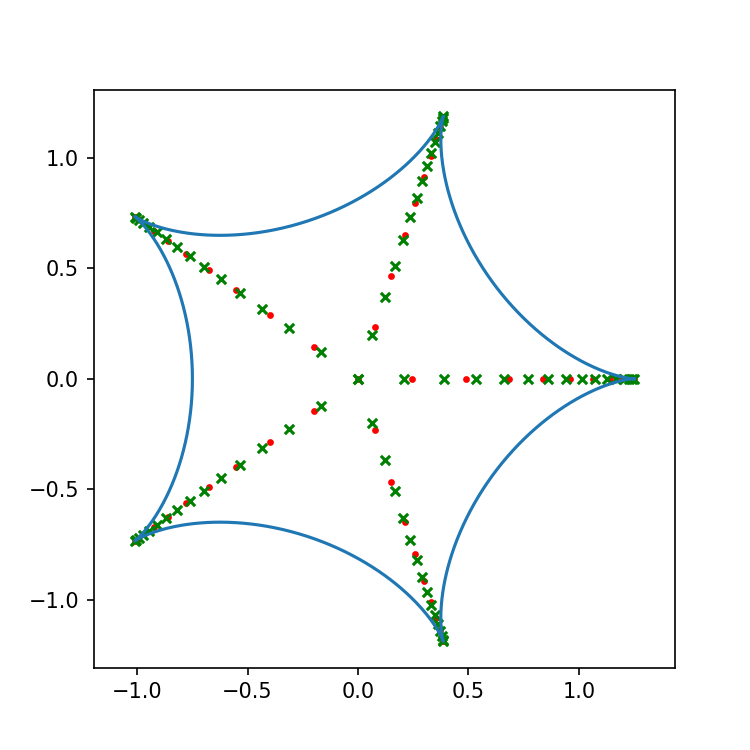}
    \caption{$T_{51}^{\sfH_5}$ and $T_{77}^{\sfH_5}$.}
    \label{fig-zeros-pent}
	\end{subfigure}
	\begin{subfigure}{.49\textwidth}
		\centering
    \includegraphics[width=\linewidth]{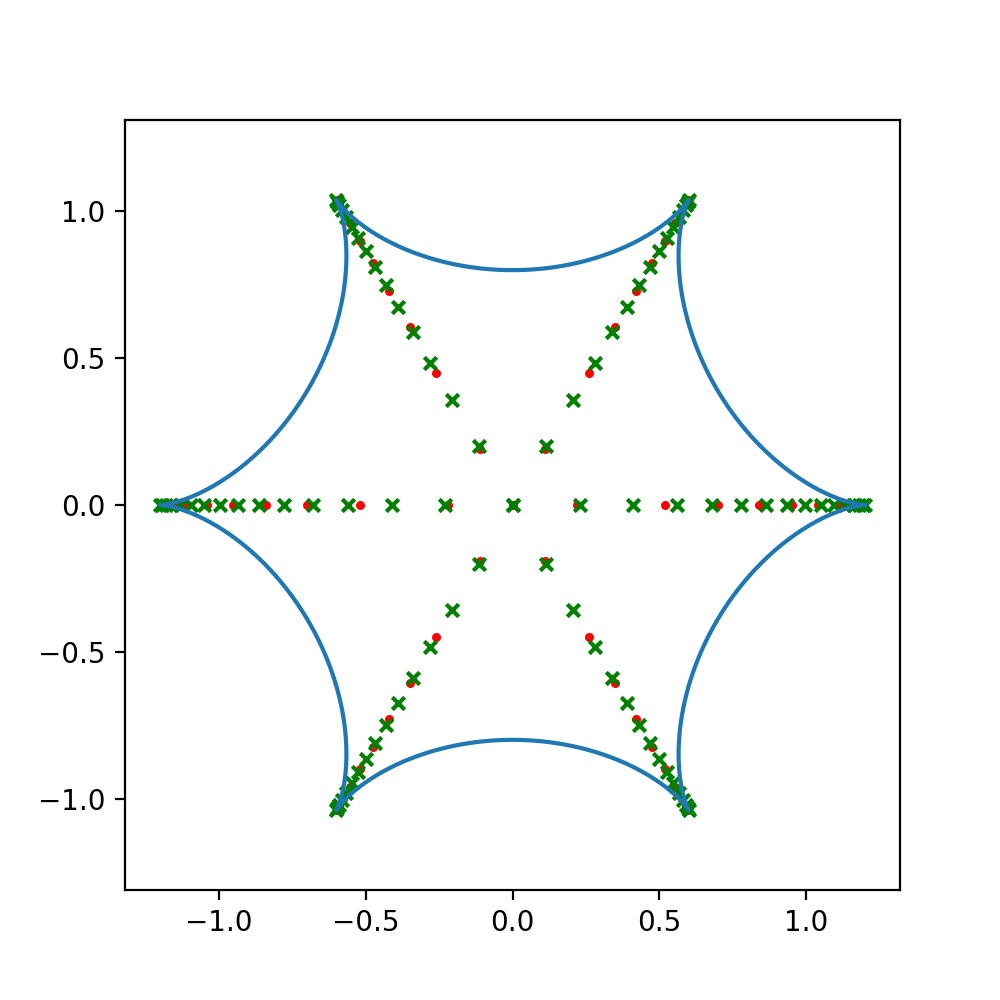}
    \caption{$T_{61}^{\sfH_6}$ and $T_{92}^{\sfH_6}$.}
    \label{fig-zeros-exo}
	\end{subfigure}
	\caption{Zeros of Chebyshev polynomials corresponding to $m$-cusped Hypocycloids. }
\end{figure}
\begin{figure}[h!]
	\begin{subfigure}{.49\textwidth}
		\centering
    \includegraphics[width=\linewidth]{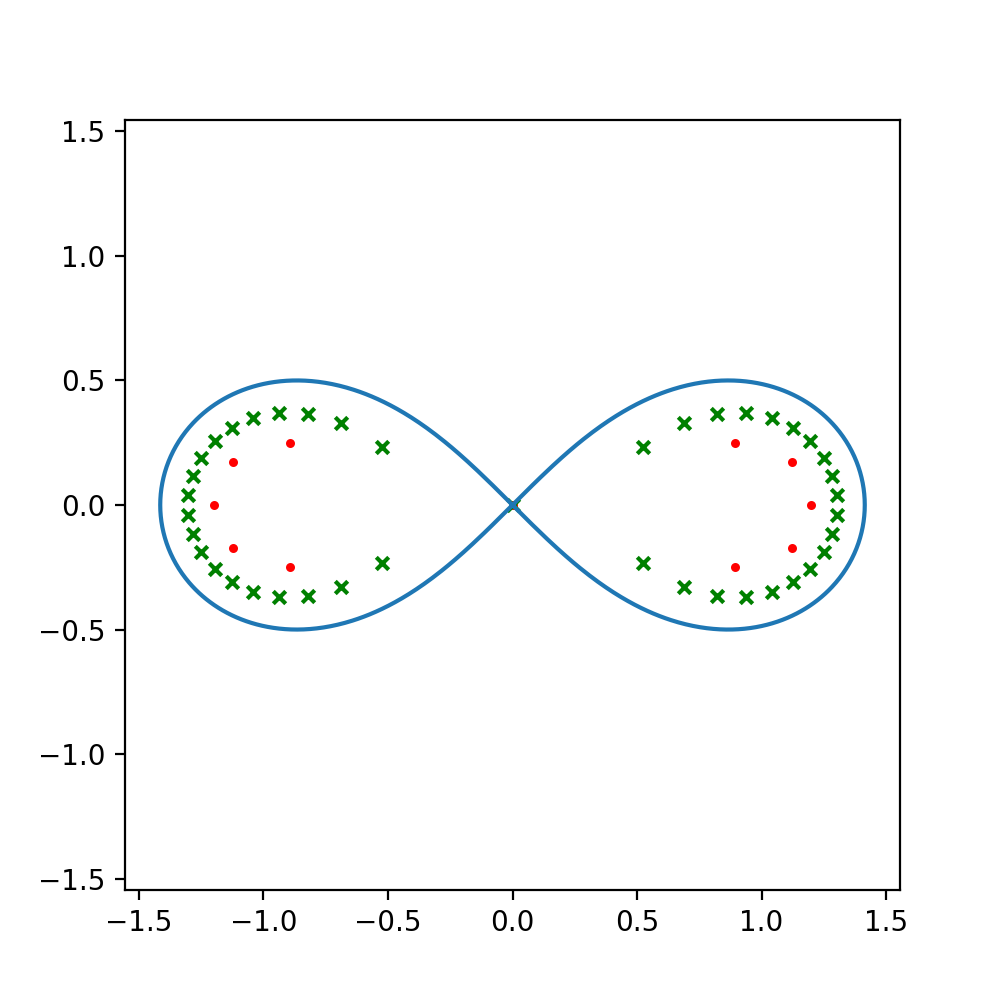}
    \caption{$T_{11}^{\sfL_2}$ and $T_{41}^{\sfL_2}$.}
    \label{fig-zeros-bernoulli}
	\end{subfigure}%
	\begin{subfigure}{.49\textwidth}
		\centering
    \includegraphics[width=\linewidth]{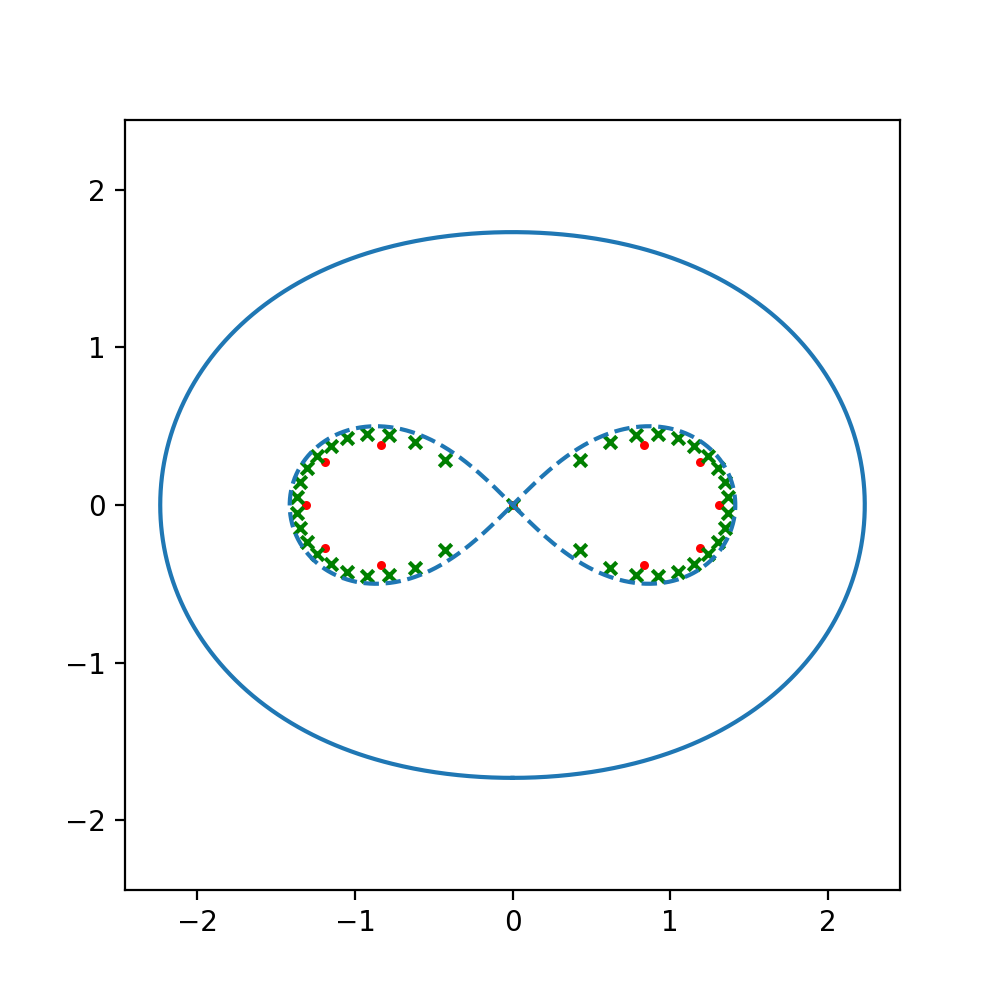}
    \caption{$T_{11}^{\sfL_2^2}$ and $T_{41}^{\sfL_2^2}$. Dotted: $\sfL_2$.}
    \label{fig-zeros-peanut}
	\end{subfigure}
	\begin{subfigure}{.49\textwidth}
		\centering
    \includegraphics[width=\textwidth]{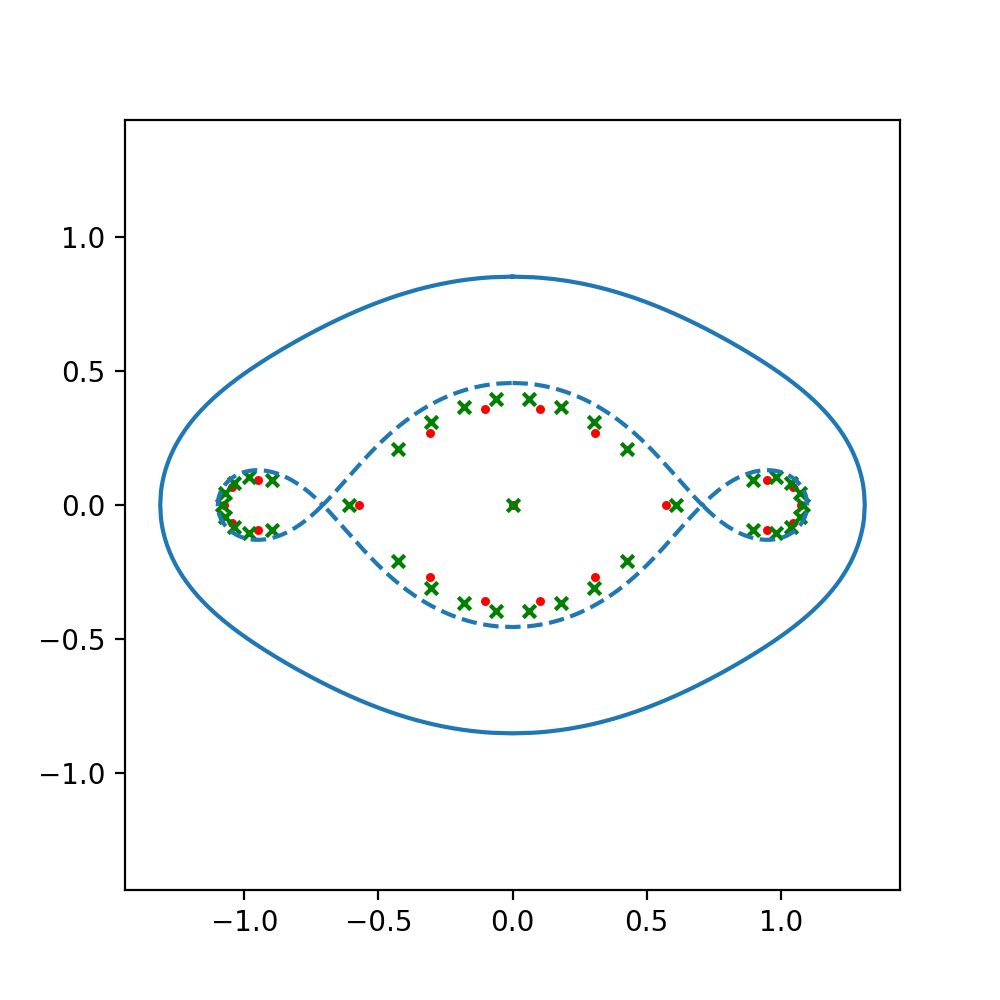}
    \caption{$T_{21}^{\sfB^{5/4}}$ and $T_{37}^{\sfB^{5/4}}$. Dotted: $\sfB^{r_4}$.} 
    \label{fig-zeros-gen-lem-1}
	\end{subfigure}%
	\begin{subfigure}{.49\textwidth}
		\centering
	\includegraphics[width = \textwidth]{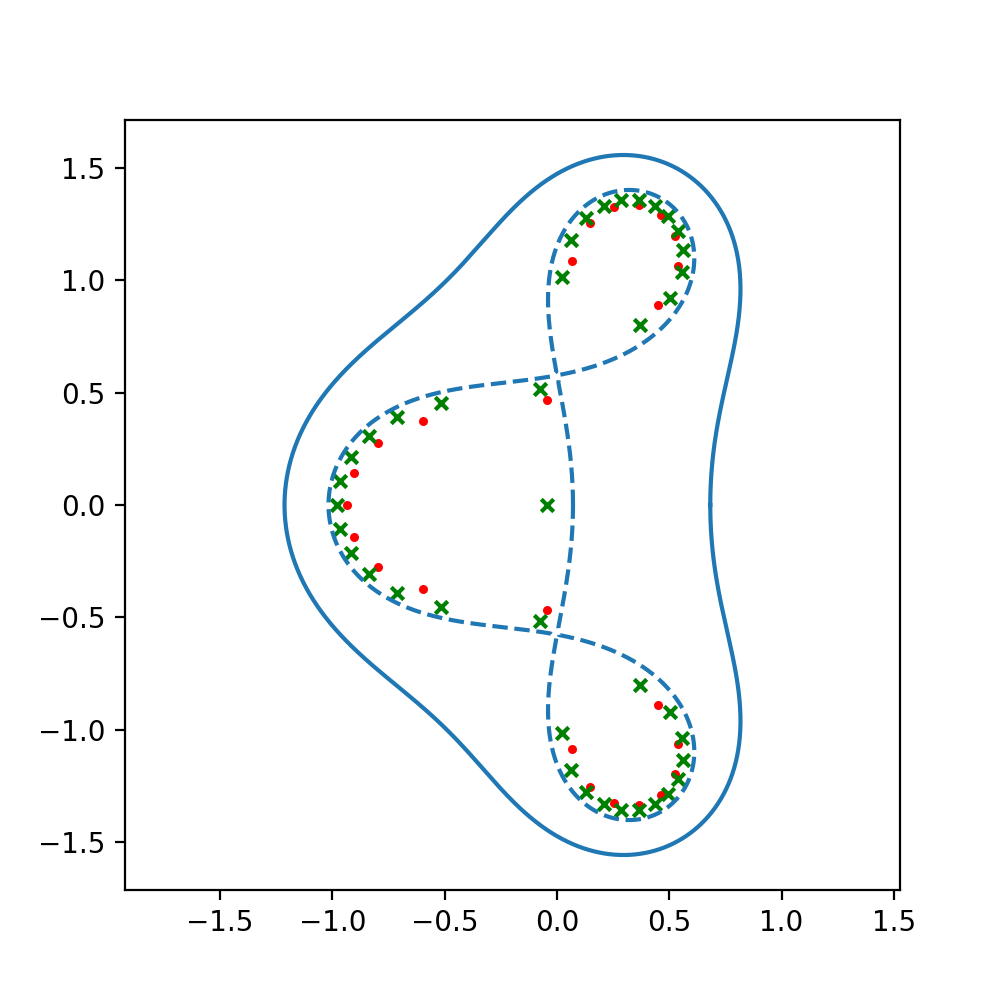}
	\caption{$T_{25}^{\sfA^{2}}$ and $T_{40}^{\sfA^{2}}$. Dotted: $\sfA^{r_3}$.}
	\label{fig-zeros-gen-lem-2}
	\end{subfigure}%
	\caption{Zeros of Chebyshev polynomials corresponding to lemniscates. 
	}
\end{figure}

\section{Discussion}
\label{sec:discussion}
In Section \ref{sec:computations} we saw several examples of computations of Chebyshev polynomials that we here wish to discuss further. 
\subsection{Widom factors}
The Widom factors computed in Section \ref{sec:computations} are computed \red{with an accuracy of $10^{-10}$ within Tang's algorithm}. We believe that Tang's algorithm can be very useful in getting suggesting the behavior of the Widom factors corresponding to a set. This method has previously been applied in \cite{christiansen-eichinger-rubin24} where a result on the limits of Widom factors -- first conjectured using numerical experiments -- was resolved theoretically. \red{We wish to argue for the validity that
	\[\lim_{n\rightarrow \infty}\cW_n(\sfE)=1\]
	holds for closures of Jordan domains with piecewise-analytic boundary.
}

\subsubsection{Regular polygons}
We begin by discussing the Widom factors computed for the regular polygons. \red{As we previously remarked, it is known that the Widom factors are bounded by $2$. Apart from this bound, not much is known.} The plots in Figures \ref{fig-triangle}-\ref{fig-hexagon} clearly suggests that $\{\cW_n(\sfE)\}$ is monotonically decreasing in $n$ if $n>2$ and $\sfE$ is an $m$-gon. \red{The numerics suggests that the Widom factors converge to $1$ at a rate proportional to $1/n$.} \red{This observation points to a difference in the supremum norm between the Chebyshev polynomials and the Faber polynomials associated with the regular $m$-gon.} Indeed, by \cite[Theorem II.2.1]{minadiaz06}, we see that if $\sfE_m$ is an $m$-gon with corners at $\exp(\frac{2\pi ik}{m})$ then
\[|F_n^{\sfE_m}(e^{\frac{2\pi ik}{m}})| =  \left(\frac{2+m}{m}+O(n^{-\frac{2+m}{m}})\right)\Capacity(\sfE_m^n)\]
as $n\rightarrow \infty$ for $k = 0,1,2\dotsc,m-1$. \red{In particular,}
\[\liminf_{n\rightarrow \infty}\frac{\|F_n^{\sfE_m}\|_{\sfE_m}}{\Capacity(\sfE_m)^n}\geq\frac{2+m}{m}.\]
\red{We remark that the dotted lines visible in Figures \ref{fig-triangle}-\ref{fig-hexagon} represent the value $(2+m)/m-1$.} If we choose to believe that \red{the sequence} $\{\cW_n(\sfE_m)\}$ \red{eventually} decreases monotonically for \red{$n$ large enough} then as Figures \ref{fig-triangle}-\ref{fig-hexagon} illustrate, the norms of the Chebyshev polynomials are significantly smaller.

Based on these considerations, the Faber polynomials corresponding to the regular polygons presumably do not provide good enough estimates as trial polynomials to determine the limits of the Widom factors. In short, we believe that the sequence $\{\cW_n(\sfE_m)\}$ \red{eventually} decreases monotonically and that the limit is $1$ as $n\rightarrow \infty$.

One approach in proving that the limit value is $1$ is to analyze some well-suited family of trial polynomials whose normalized norms converge to $1$. How to construct such a family is not immediately clear to us. Under the assumption that $\lim_{n\rightarrow \infty}\cW_n(\sfE_m) = 1$ holds this would not constitute the only example where the Faber polynomials are ill-suited trial polynomials for determining the detailed behavior of $\cW_{n}(\sfE)$. In the extreme case, an example of Clunie \cite{clunie59} further studied by Suetin \cite[p. 179]{suetin84} and Gaier \cite{gaier99} \red{shows} the existence of a quasi-disk $\sfE$ such that the quantity
\[\frac{\|F_n^{\sfE}\|_{\sfE}}{\Capacity(\sfE)^n}\]
is unbounded in $n$ along some sparse subsequence. In comparison, \cite[Theorem 1]{andrievskii-nazarov19} shows that $\cW_{n}(\sfE)$ is still bounded in this case. 

\subsubsection{Hypocycloids}

Recall that $\sfH_m$ denotes the $m$-cusped hypocycloid defined in \eqref{eq:hypocycloid}. Since $\sfH_m$ is piecewise-analytic away from the cusp points which are outward pointing, \cite[Theorem 2.1]{totik-varga15} can be applied to deduce that $\cW_n(\sfH_m)$ is bounded. The Faber polynomials again seem ill-suited in order draw conclusions concerning the precise behavior of the Widom factors in this case\red{,} since it is shown in \cite{he-saff94} that
\[\limsup_{n\rightarrow \infty}\|F_n^{\sfH_m}\|_{\sfH_m}\geq 2\] 
for $m = 2,3,4$. Comparisons with Faber polynomials are therefore inconclusive as to whether
\[\limsup_{n\rightarrow \infty}\cW_{n}(\sfH_m)\leq 2\]
holds or not. The numerical experiments illustrated in Figures \ref{fig-hyp-3}-\ref{fig-hyp-6} paint a richer picture. \red{Again, it seems that the sequence $\cW_{n}(\sfH_m)$ converges to $1$ with a convergence rate proportional to $n^{-1/2}$ and that the sequence $\cW_n(\sfH_m)$ is monotone decreasing if $n$ is large enough. In comparison to the Widom factors of the regular $m$-gons, the decay appears to be slower in the case of hypocycloids. } 

\subsubsection{Circular lunes}
Recall that $\sfC_\alpha$, defined in \eqref{eq:lune_def}, denotes the circular lune with vertices at $\pm \alpha$ and exterior angle $\pi\alpha$. Based on the plots in Figures \ref{fig-lune-1-2} and \ref{fig-lune-3-2} together with the \red{results} in Table \ref{tab:circular-lune}, it seems likely that the Widom factors corresponding to $\sfC_\alpha$ converges to $1$. It is interesting to note that when the set is convex then the whole sequence $\cW_n(\sfC_\alpha)$ appears to be monotonically decreasing, see Figure \ref{fig-lune-3-2}. On the other hand, if $\alpha\in (0,1)$ then two distinct monotonically decreasing subsequences of $\cW_{n}(\sfC_\alpha)$ emerge based on the parity of the degrees.  We believe that the sequence $\{\cW_{2n+l}(\sfC_\alpha)\}_n$ is monotonically decreasing to $1$ for fixed $l\in \{0,1\}$ if $\alpha\in (0,2)$. \red{The rate of convergence appears to be different depending on the convexity properties of $\sfC_{\alpha}$. In particular, if the set is convex then the rate of decrease is proportional to $n^{-1}$ while for $\sfC_{1/2}$ the decay is slower.} If $\alpha = 2$ then $\cW_{n}(\sfC_{2}) = 2$ for any value of $n$. 

\subsection{Motivating the Faber connection}
\label{subsec:faber_connection}
The Chebyshev polynomials and Faber polynomials will both \red{inherit} symmetric structure\red{s from} the underlying set. To see this, one should compare Lemma \ref{lem:symmetry} to \cite[Theorem 2.2]{he94} or \cite[Theorem 2.1]{he-94-2}. This comparison is essentially encapsulated in the following simple lemma.

\red{
\begin{lemma}
    If $\sfE$ is invariant under rotations by $2\pi/m$, $n\in \N$, and $l\in \{0,1,\dotsc,m-1\}$, then both $T_{nm+l}^{\sfE}$ and $F_{nm+l}^{\sfE}$ belong to
    \begin{equation}
    	\{z^lQ(z^m): Q\text{ is a monic polynomial of degree $n$}\}.
    	\label{eq:symmetric_polynomial_set}
    \end{equation}
    In particular,
    \[T_{l}^{\sfE}(z) = F_{l}^{\sfE}(z) = z^l,\quad 0\leq l \leq m-1.\]
    \label{lem:faber-chebyshev-equality}
\end{lemma}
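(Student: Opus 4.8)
The statement for the Chebyshev polynomials is precisely the content of Lemma \ref{lem:symmetry}, so the plan is to establish the corresponding assertion for the Faber polynomials $F_{nm+l}^{\sfE}$ and then read off the two special cases. Write $\omega = e^{2\pi i/m}$, so that the hypothesis reads $\omega\sfE = \sfE$. The first step is to transfer this symmetry to the exterior conformal map $\Phi$ from \eqref{eq:exterior-conformal}. Since multiplication by $\omega$ is an automorphism of the unbounded component of $\C\setminus\sfE$, the function $z\mapsto \omega^{-1}\Phi(\omega z)$ is again a conformal map of this component onto $\C\setminus\overline{\D}$ fixing $\infty$. Comparing Laurent expansions at infinity, both $\Phi(z)$ and $\omega^{-1}\Phi(\omega z)$ have leading coefficient $\Capacity(\sfE)^{-1}>0$; as any two such normalized maps differ by a rotation of the target disk, and that rotation is forced to be trivial by the matching positive leading coefficients, one obtains the functional equation $\Phi(\omega z) = \omega\,\Phi(z)$.

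The second step uses this to derive a symmetry of the Faber polynomial directly from its defining relation \eqref{eq:faber_definition}. Setting $g(z) = \Capacity(\sfE)\Phi(z)$, the identity above gives $g(\omega z) = \omega\, g(z)$, and hence $g(\omega z)^{nm+l} = \omega^{nm+l} g(z)^{nm+l} = \omega^{l} g(z)^{nm+l}$ because $\omega^{m}=1$. Substituting $\omega z$ into $g(z)^{nm+l} = F_{nm+l}^{\sfE}(z) + O(z^{-1})$ and comparing with $\omega^{l}$ times the same expansion yields
\[F_{nm+l}^{\sfE}(\omega z) - \omega^{l} F_{nm+l}^{\sfE}(z) = O(z^{-1}), \quad z\to\infty.\]
The left-hand side is a polynomial that decays at infinity, so it vanishes identically, giving $F_{nm+l}^{\sfE}(\omega z) = \omega^{l} F_{nm+l}^{\sfE}(z)$.

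The final step extracts the form of the polynomial. Writing $F_{nm+l}^{\sfE}(z)=\sum_j b_j z^j$, the functional equation forces $b_j(\omega^{j}-\omega^{l})=0$, so $b_j$ can be nonzero only when $j\equiv l \pmod m$. The surviving exponents are exactly $l, m+l, \dots, nm+l$, which means $F_{nm+l}^{\sfE}(z) = z^{l}Q(z^{m})$ for a polynomial $Q$ of degree $n$, and monicity of $F_{nm+l}^{\sfE}$ makes $Q$ monic. Together with Lemma \ref{lem:symmetry}, this proves membership in \eqref{eq:symmetric_polynomial_set}. For the last assertion one takes $n=0$: a monic $Q$ of degree $0$ is identically $1$, so both $T_l^{\sfE}$ and $F_l^{\sfE}$ reduce to $z^l$ for $0\le l\le m-1$. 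The one point requiring care is the symmetry of $\Phi$, that is, justifying that the normalized exterior map is genuinely unique up to the trivial rotation; everything afterwards is a formal manipulation of the defining expansion.
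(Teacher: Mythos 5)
Your proposal is correct and follows essentially the same route as the paper: invoke Lemma \ref{lem:symmetry} for the Chebyshev part, establish the symmetry $\Phi(e^{2\pi i/m}z) = e^{2\pi i/m}\Phi(z)$ of the exterior map, deduce $F_{nm+l}^{\sfE}(e^{2\pi i/m}z) = e^{2\pi il/m}F_{nm+l}^{\sfE}(z)$ from the defining expansion \eqref{eq:faber_definition}, and conclude via coefficient comparison. The only difference is that you fill in details the paper leaves implicit (the uniqueness argument for $\Phi$ and the observation that a polynomial which is $O(z^{-1})$ at infinity vanishes), which is a welcome elaboration rather than a departure.
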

}
\begin{proof}
 \red{From Lemma \ref{lem:symmetry} we conclude the validity of every claim made on $T_{nm+l}^{\sfE}$. The conformal map $\Phi$ from \eqref{eq:exterior-conformal} necessarily satisfies}
    \[e^{-2\pi i/m}\Phi(e^{2\pi i/m}z) = \Phi(z)\]
    from which we gather that
    \[F_{nm+l}^{\sfE}(e^{2\pi i/m}z) = e^{2\pi il/m}F_{nm+l}^{\sfE}(z) \]
    if $l\in \{0,1,\dotsc,m-1\}$. \red{In a completely analogous fashion to how Lemma \ref{lem:symmetry} was shown, we gather that $F_{nm+l}^{\sfE}$ belongs to the set in \eqref{eq:symmetric_polynomial_set}.} In the special case where $n = 0$ we see that $F_l^{\sfE}(z) = z^l\red{ = T_{l}^{\sfE}}(z)$. 
\end{proof}
Of course Lemma \ref{lem:faber-chebyshev-equality} has more to do with the rotational symmetry of a set than any other property. It does, however, give several easy examples where the two families of polynomials overlap. 

If $\sfE$ is a rectifiable Jordan curve and $\Phi$ is the conformal map from the exterior of $\sfE$ to $\{z:|z|>1\}$ of the form
\[\Phi(z) = \Capacity(\sfE)^{-1}z+a_0+a_{-1}z^{-1}+\cdots\]
then it can be shown that
\[F_n^{\sfE}(z) = [\Capacity(\sfE)\Phi(z)]^n\left(1+O\left(\frac{1}{r^n}\right)\right)\]
for $z\in \sfE^r \coloneqq \{\zeta: |\Phi(\zeta)| = r\}$. If $[\Capacity(\sfE)\Phi(z)]^n$ was a polynomial of degree $n$ then it would follow from \eqref{eq:szego_inequality} that it would coincide with the corresponding Chebyshev polynomial. We have already seen examples of this when studying lemniscates. Although this is rarely the case, we observe that $F_n^{\sfE}$ will be an increasingly good candidate for obtaining relatively small maximum values on $\sfE^r$ as $r\rightarrow \infty$. For a fixed degree $n$, $F_n^{\sfE}$ will be asymptotically minimal on $\sfE^r$ in the sense that
\[\lim_{r\rightarrow \infty}\frac{\|F_n^{\sfE}\|_{\sfE^r}}{\Capacity(\sfE^r)^n} =1.\]
We believe that this serves as motivation for why one could expect
\begin{equation}
	T_n^{\sfE^r}\rightarrow F_n^{\sfE}
	\label{eq:chebyshev_faber_limit}
\end{equation}
as $r\rightarrow \infty$ to hold in general. Based on the numerical data illustrated in Figure \ref{fig-faber} this is clearly hinted upon for these specific domains. 

\red{We} remark that similar patterns have materialized for any other combination of degrees and sets that we have considered. In the general case\red{,} it is clear that $\sfE^r$ will be an analytic curve for $r>1$ and hence the regularity of the boundary of $\sfE$ is perhaps of less importance since the Faber polynomials corresponding to $\sfE$ are the same as the ones corresponding to $\sfE^r$. We stress the fact that the algorithm outputs polynomials $P_n$ such that $\|P_n\|_{\sfE}-\|T_n^{\sfE}\|_{\sfE}$ is small. This is not exactly the same as saying that $\|P_n-T_n^{\sfE}\|_{\infty}$ is small with $\|\cdot\|_\infty$ defined in \eqref{eq:polynomial_norm}. What is true, is that for a fixed $n$, $\|P_n\|_{\sfE}\rightarrow \|T_n^{\sfE}\|_{\sfE}$ implies that $\|P_n-T_n^{\sfE}\|_{\infty}\rightarrow 0$. The computations remain consistent throughout. No matter how close we approximate the minimal norm, the behavior as suggested in Figure \ref{fig-faber} remains.

\subsection{Zero distribution}
We recall that if $P$ is a polynomial then $\nu$ is the probability measure defined in Section \ref{sec:intro} via the formula
\[\nu(P) = \frac{1}{\deg(P)}\sum_{j=1}^{\deg(P)}\delta_{z_j}\red{,}\]
where $\{z_j\}$ are the zeros of $P$ counting multiplicity. Also, given a compact set $\sfE$ we use $\mu_\sfE$ to denote the equilibrium measure on $\sfE$.

It is shown in \cite{saff-totik90} that the zeros \red{of Chebyshev polynomials} corresponding to the closure of a Jordan domain stay away from the boundary precisely when the bounding curve is analytic. As such we see that in all of our examples, except for the cases of lemniscates $\{z:|P(z)| = r\}$ with analytic boundary, the zeros should approach some part of the boundary. From \cite[Theorem 1.1]{christiansen-simon-zinchenko-IV} we gather that every ``corner point'' on the respective sets $\sfC_\alpha$, $\sfE_m$ and $\sfH_m$ attract zeros. This also appears to be the case, albeit, slowly for $\sfC_{1/2}$.

Predicting the behavior of zeros of extremal polynomials based on plots has proven hazardous in the past. In particular, we refer to the reader to \cite{saff-stylianopoulos08} where five conjectures concerning limiting zero distributions are made very plausible using numerical plots only to be proven to be wrong using theoretical results. 

\subsubsection{Regular polygons}
We adopt the notation $\sfE_m$ to denote the regular polygon with $m$ sides. As is suggested by Figures \ref{fig-zeros-tri}-\ref{fig-zeros-hexagon}, the zeros of $T_n^{\sfE_m}$ for low degrees appear to lie on the diagonal lines between the vertices and the origin. However, by increasing the degree it seems clear that the zeros approach the boundary in all cases but \red{the equilateral triangle as seen in \ref{fig-zeros-tri}}. In \cite{saff-stylianopoulos08}\red{,} the case of Faber polynomials on $\sfE_3$ are discussed. Here the authors specify that for small degrees the zeros of $F_n^{\sfE_3}$ appear to distribute along the diagonals\red{,} however\red{,} they also note that as a consequence of \cite[Theorem 1.5]{kuijlaars-saff95} at least a subsequence of $\nu(F_n^{\sfE_3})$ converges in the weak-star sense to $\mu_{\sfE_3}$ which is supported on the boundary. \red{We find it intriguing to determine whether the zeros of the Chebyshev polynomials on $\sfE_3$ remain on the diagonal for all values of $n$ as this contrasts the behavior of Faber polynomials in this particular case.}

The zeros of $F_n^{\sfE_m}$ for general $m$-gons are illustrated in \cite{he-94-2} and appear to behave very similar to the ones for $T_n^{\sfE_m}$ \red{when $m\geq 4$}. We therefore believe that the zeros should approach the boundary \red{when $m\geq 4$} in the sense that \eqref{eq:zeros_inside} should hold for every compact set in the interior. This would of course also imply that
\[\nu(T_n^{\sfE_m})\xrightarrow{\ast}\mu_{\sfE_m}\]
as $n\rightarrow \infty$.

\subsubsection{Circular lunes}
Recall the definition of $\sfC_{\alpha}$ from \eqref{eq:lune_def}. Based on the plot in Figure \ref{fig-zeros-lune-1-2} it appears \red{that} most of the zeros approach the boundary in the case when $\alpha = 1/2$ and that \eqref{eq:zeros_inside} should hold for any compact set contained in the interior. This is in fact a known result and follows from \cite[Theorem 2.1]{saff-stylianopoulos15}. Indeed, from there we gather that $\nu(T_n^{\sfC_{\alpha}})\xrightarrow{\ast}\mu_{\sfC_\alpha}$ as $n\rightarrow \infty$ for any $\alpha\in (0,1)$. In this sense\red{,} the computed polynomials serve to confirm the predicted \red{behavior} from theoretical results. For any value of $\alpha\in (0,1)\cup(1,2)$ it follows from \cite[Theorem 1.5]{kuijlaars-saff95} that $\nu(F_n^{\sfC_{\alpha}})\xrightarrow{\ast}\mu_{\sfC_\alpha}$ along some subsequence. Motivated by the resemblance between the plots of zeros for Faber polynomials in \cite{he-94-2} with the corresponding zeros of $T_n^{\sfC_\alpha}$ computed here, we suspect that \[\nu(T_n^{\sfC_\alpha})\xrightarrow{\ast}\mu_{\sfC_\alpha},\quad n\rightarrow \infty\] for any value of $\alpha\in (0,1)\cup(1,2)$. Note that $\sfC_{1} = \T$ and hence $T_n^{\sfC_{1}}(z) =z^n$ has all its zeros at the origin.

\subsubsection{Hypocycloids}
\red{If the bounding curve has an outward cusp\red{,} then the example of Hypocycloids illustrate that the zeros of Faber polynomials need not approach all of the boundary, as shown in \cite{he-saff94}.} We believe that the \red{zeros of the corresponding Chebyshev polynomials behave analogously}. It is clearly suggested by Figures \ref{fig-zeros-del}-\ref{fig-zeros-exo} that the support of $\nu(T_n^{\sfH_m})$ is confined to the diagonals between the cusps and the origin. This is in accordance with the behavior exhibited by $\nu(F_n^{\sfH_m})$ and we believe that an analogous result as \cite[Theorem 3.1]{he-saff94} is true in this case. 

\red{
That is to say, we believe that the zeros of $T_n^{\sfH_m^r}$ for $r\geq 1$ are confined to the set
\begin{equation}
	\left\{te^{2\pi i k/m}:0\leq t \leq \frac{m}{m-1},\, k = 0,1,\dotsc,m-1\right\}.
\label{eq:hyp-lines}
    \end{equation}}
If we choose to believe the connection \red{between Chebyshev polynomials and Faber polynomials on equipotential lines from \eqref{eq:chebyshev_faber_limit}} then this would imply that the zeros of $T_n^{\sfH_m^r}$ would move along the \red{diagonals \eqref{eq:hyp-lines}} as $r$ increases and approach the corresponding zeros of the Faber polynomials, this is something we find reasonable to believe.

On the other hand, we note that numerical simulations indicate that the zeros of the corresponding Bergman polynomials corresponding to $\sfH_m$ and its interior, all lie on the straight lines in \eqref{eq:hyp-lines} for small degrees. However\red{,} it follows from \cite[Theorem 2.1]{saff-stylianopoulos08} that at least a subsequence of the Bergman polynomials have zero counting measures converging weak-star to $\mu_{\sfH_m}$.

\subsubsection{Lemniscates}
Recall that $\sfL^r_m = \{z:|z^m-1| = r^m\}$, $\sfL^r_2 = \sfL^r$ and that $\sfL = \sfL^1$. Based on Figure \ref{fig-zeros-bernoulli} it seems reasonable to assume that 
\begin{equation}
    \lim_{n\rightarrow \infty}\nu(T_{2n+1}^{\sfL})(M) = 0
    \label{eq:bernoulli-compact-set-zeros}
\end{equation}
for any compact set $M$ contained in $\{z:|z^2-1|<1\}$. It actually appears to be the case that all the zeros approach the boundary. The main theorem in \cite{saff-totik90}, which states that zeros of Chebyshev polynomials corresponding to an analytic Jordan curve stay away from the boundary is not applicable in this case because $\sfL$ does not have a connected interior. If \eqref{eq:bernoulli-compact-set-zeros} could be established, a consequence of this would be that $\nu(T_{2n+1}^{\sfL})$ converges in the weak-star sense to the equilibrium measure on $\sfL$. It should be noted in this regard that by changing the variable to $\zeta = z^2-1$ it follows that
\[T_{2n+1}^{\sfL}(z) = (\zeta+1)^{1/2}T_n^{1/2}(\zeta)\]
where $T_n^{1/2}$ is the monic minimizer of the expression
\[\max_{\zeta\in \T}\left|(\zeta+1)^{1/2}\left(\zeta^n+\sum_{k=0}^{n-1}a_k\zeta^k\right)\right|.\]
Corresponding to each weight of the form $|\zeta+1|^{s}$ for $s\geq 0$ there is a minimizing weighted Chebyshev polynomial which we denote with $T_n^{s}$, see \cite{bergman-rubin24}. In the particular case where $s = 1$ it is shown in \cite[Theorem 3]{bergman-rubin24} that $\nu(T_n^{1})$ converges \red{in the } weak-star \red{sense} to \red{the} equilibrium measure on $\T$. This implies that an analogous result as \eqref{eq:bernoulli-compact-set-zeros} is valid for compact subsets of $\D$. There is no reason to believe that such a result should exclusively hold for the parameter value of $s=1$ and we therefore suspect that
    \[\nu(T_{2n+1}^{\sfL})\xrightarrow{\ast}\mu_{\sfL}, \quad n\rightarrow \infty.\]

Note that $\nu(T_{2n}^{\sfL}) = \frac{1}{2}(\delta_{-1}+\delta_1)$ for any $n$ and hence very different zero behavior would be exhibited for the different subsequences if the conjecture is true. This is however the case for the Faber polynomials. From a result in \cite{ullman60}, it follows that
\[\nu(F_{2n+1}^{\sfL})\xrightarrow{\ast}\mu_{\sfL}.\]
Furthermore, it is shown there that all the zeros of $F_{2n+1}^{\sfL}$ lie on or inside $\sfL$.

We turn our attention to the outer lemniscates $\sfL^r$ with $r>1$. Surprisingly, based on Figure \ref{fig-zeros-peanut} it seems like the zeros of $T_{2n+1}^{\sfL^r}$ all lie strictly inside $\sfL$ except for the single zero at $0$. Although the main Theorem in \cite{saff-totik90} implies that the zeros asymptotically stay away from $\sfL^r$\red{,} there is no results hinting toward the fact the zeros seem to cluster on $\sfL$. If one believes in \eqref{eq:chebyshev_faber_limit}, \red{as argued for in Section \ref{subsec:faber_connection},} \red{we find it} reasonable to \red{believe} that the zeros of $T_{2n+1}^{\sfL^r}$ lie on or inside $\sfL$ for all values of $n$ and $r$ since the zeros of $F_{2n+1}^{\sfL}$ have this very behavior.
    
Analogous results seem to hold true with $\sfL^r$ replaced by $\sfL_m^r$ for any value of $m$ as the corresponding numerical simulations indicate the same pattern. Generalizations of Ullmans result concerning the asymptotic zero distribution of the Faber polynomials on $\sfL_m$ can be found in \cite{he94}. 

We further believe that a \red{similar picture} holds for any connected lemniscate. To \red{make precise our belief,} we introduce the notion of a critical value of a polynomial. This is a number $P(z)$ where $z$ is such that $P'(z) = 0$. The polynomial $z^2-1$ has one critical value, namely $-1$ which is attained at the origin. This implies that the curve $\sfL = \{z: |z^2-1| = 1\}$ will contain a critical point of $z^2-1$ resulting in the fact that the curve forms a crossing with itself at the origin. In general, if $c$ is a critical value of a polynomial $P$ then $\{z:|P(z)| =|c|\}$ will contain a crossing point.

If we consider the polynomial $Q(z) = z^4-z^2$ then $Q$ has two critical values, namely $1/4$ and $0$. Upon inspection of Figure \ref{fig-zeros-gen-lem-1} it becomes apparent that the zeros of the Chebyshev polynomials on the curve $\{z: |z^4-z^2| = 5/4\}$ seem to approach the critical curve $\{z: |z^4-z^2| = 1/4\}$ which correspond to the lemniscate where the largest critical value is attained (in modulus). Equivalently, this curve is characterized by being the curve $\{z:|z^4-z^2| = r\}$ with smallest value of $r>0$ which is connected. 

A similar pattern emerges for the lemniscates of the form $\{z:|z^3+z+1| = r\}$ with $r\geq \sqrt{31/27}$, see Figure \ref{fig-zeros-gen-lem-2}. For the polynomial $P(z) = z^3+z+1$ the critical point is $\pm i/\sqrt{3}$ and the corresponding critical value is $1\pm i2/3\sqrt{3}$. Since $|1+i2/3\sqrt{3}| = \sqrt{31/27}$ we see that the critical lemniscate corresponds to $r = \sqrt{31/27}$. Again, this critical lemniscate seems to attract the zeros of the Chebyshev polynomials corresponding to larger values of $r$. 

\red{We question if this holds in greater generality. Let $P$ be a polynomial of degree $m$ with largest critical value in terms of absolute value given by $c$. For any $r\geq|c|$ let
	\[\sfE^r = \{z: |P(z)| = r\}\]
	then for a fixed $l\in \{1,\dotsc,m-1\}$ is it true that
	\[\nu(T_{nm+l}^{\sfE^r})\xrightarrow{\ast}\mu_{\sfE^{|c|}}\]
	as $n\rightarrow \infty$?}

Based on Figures \ref{fig-zeros-gen-lem-1} and \ref{fig-zeros-gen-lem-2} this seems to be the case. Observe that \eqref{eq:cheb_nm_sequence} implies that $T_{nm}^{\sfE^r} = a^{-n}P(z)^n$ where $a$ is the leading coefficient of $P$ in which case the zero counting measure is constant.

It could be further speculated what happens in the general case for level curves of conformal maps. Assume that $\sfE$ is a connected compact set with simply connected complement and $\Phi:\C\setminus\sfE\rightarrow\{z:|z|>1\}$ is the conformal map of the form $\Phi(z)=\Capacity(\sfE)^{-1}z+O(1)$ as $z\rightarrow\infty$. Again, introducing the set $\sfE^r=\{z:|\Phi(z)|=r\}$ then the bounding curve of $\sfE^r$ is analytic for $r>1$. From \cite{saff-totik90} we know that the zeros of $T_n^{\sfE^r}$ asymptotically stay away from the boundary, in the sense that there exists a neighborhood of the boundary where $T_n^{\sfE^r}$ is zero free for large $n$. The question is if something similar as in the case of lemniscates happens in this situation. Do the zeros asymptotically approach $\sfE$? This is true for the corresponding Faber polynomials \red{which} could hint at this being true for the corresponding Chebyshev polynomials.

\subsection{Concluding remarks}
With this article, we hope to exemplify the usefulness of Tang's generalization of the Remez algorithm to the study of Chebyshev polynomials. Our research into the matters commenced by considering the zeros of the Chebyshev polynomials corresponding to the Bernoulli lemniscate
\[\sfL = \{z:|z^2-1| = 1\}.\]
Based on the fact that $T_{2n}^{\sfL}(z) = (z^2-1)^n$ it was suggested in \cite{christiansen-simon-zinchenko-III} that the odd Chebyshev polynomials $T_{2n+1}^{\sfL}$\red{,} which apart from having a zero at the origin\red{,} should behave similarly. Explicitly it is written \red{in} \cite[p. 215]{christiansen-simon-zinchenko-III} that 
\red{\begin{quote}
``...we suspect (but cannot prove) that for $j$ large all the other zeros of $T_{2j+1}$ lie in small neighborhoods of $\pm 1$ and that the above $d\mu_\infty$ is also the limit through odd $n$'s.'' 	
\end{quote}}
Here $d\mu_\infty = \frac{1}{2}(\delta_{-1}+\delta_1)$. We initially set out to show this. Since we did not progress in this regard we started considering numerical methods to compute the Chebyshev polynomials with the intent to better understand how the zeros approached $\pm1$. Using Tang's algorithm we could compute the Chebyshev polynomials corresponding to $\sfL$ and the result surprised us. The zeros seemed to behave opposite to \red{what was conjectured. They approached} the bounding curve rather than the two points $\pm 1$. The use of Tang's algorithm therefore showed us that the hypothesis we initially had believed was probably incorrect and that \red{the} conjecture should be modified. \red{Partial progress to understanding the zeros of Chebyshev polynomials relative to lemniscates was done in} \cite{bergman-rubin24} by studying a related problem. However, we are still lacking a complete proof of this.

With Tang's algorithm at hand\red{,} we set out to study Chebyshev polynomials corresponding to a wide variety of sets whose asymptotic behavior remain unknown. We believe that making use of Tang's algorithm \red{(or other related algorithms such as the Lawson algorithm)} is a good way of getting predictions on the behavior of Chebyshev polynomials. The results in \cite{bergman-rubin24} and \cite{christiansen-eichinger-rubin24} are based on hypothesis formulated using initial numerical experiments. Some rather surprising results have also been suggested to us by numerical experiments along the way. In particular, the relation between Faber polynomials and Chebyshev polynomials does not seem to have been given any attention in the literature\red{, although it is known that they coincide for certain sets}.

In short, we believe that \red{the} use of Tang's algorithm in the study of Chebyshev polynomials may prove useful in the future when formulating \red{well-posed hypothesis} on their asymptotic behavior.

\appendix
\section{Tang's algorithm}
\label{sec:appendix}
We recall that Tang's algorithm seeks a linear functional
\begin{equation}
	L_{\boldr,\boldalpha,\boldz}(g)=\sum_{j=1}^{n+1}r_j\mathrm{Re}(e^{-i\alpha_j}g(z_j))
\end{equation}
conditioned to satisfy $r_j\in [0,1]$, $\alpha_j\in [0,2\pi)$, $z_j\in \sfE$, $\sum r_j = 1$ and $L_{\boldr,\boldalpha,\boldz}(\varphi_k) = 0$ for every $k = 1,\dotsc,n$. The goal with applying the algorithm is to obtain coefficients $\lambda_{1},\dotsc,\lambda_n$ such that
\[\|f-\sum_{k=1}^{n}\lambda_k\varphi_k\|_{\sfE}\]
is minimal.

The linear nature of the maximizing linear functional suggests that it is beneficial to change the perspective to linear algebra. We use the notation from \cite{tang87,tang88, komodromos-russell-tang95,fischer-modersitzki-93} and define the matrix
\begin{equation}
    A(\boldz,\boldalpha) \red{\coloneqq} \begin{pmatrix}
        1 & 1 & \cdots & 1 \\
        \re(e^{-i\alpha_1}\varphi_1(z_1))  & \re(e^{-i\alpha_2}\varphi_1(z_2)) & \cdots & \re(e^{-i\alpha_{n+1}}\varphi_1(z_{n+1})) \\
        \vdots & \vdots & \ddots & \vdots \\
        \re(e^{-i\alpha_1}\varphi_n(z_1))  & \re(e^{-i\alpha_2}\varphi_n(z_2)) & \cdots & \re(e^{-i\alpha_{n+1}}\varphi_n(z_{n+1})) 
    \end{pmatrix}
\end{equation}
together with the vector
\begin{equation}
    c_f(\boldz, \boldalpha) \red{\coloneqq} \begin{pmatrix}
        \re(e^{-i\alpha_1}f(z_1)) \\ 
        \vdots \\
        \re(e^{-i\alpha_{n+1}}f(z_{n+1}))
    \end{pmatrix}.
\end{equation}
It then follows from \eqref{eq:maximizing_linear_functional_form} that
\begin{equation}
    L_{\boldr,\boldalpha,\boldz}(f) = c_f(\boldz, \boldalpha)^T \boldr
    \label{eq:linear_functional_vector_representation}
\end{equation}
and the constraints \eqref{eq:condition_convex} and \eqref{eq:condition_annihilate} become embedded in the equation
\begin{equation}
    A(\boldz,\boldalpha)\boldr = \begin{pmatrix}
    1\\0\\\vdots\\0
\end{pmatrix}.
\label{eq:constraint_matrix}
\end{equation}
Parameters $\boldr,\,\boldalpha,\,\boldz$ satisfying \eqref{eq:constraint_matrix} are called \emph{admisible} if additionally $A(\boldz,\boldalpha)$ is invertible. If $\varphi^\ast = \sum_{k=1}^{n}\lambda_k^\ast \varphi_k$, $\lambda_k\red{^\ast}\in \R$, is a best approximation and $\boldr^\ast,$ $\boldalpha^\ast$, $\boldz^\ast$ are corresponding admissible parameters such that $L_{\boldr^\ast,\boldalpha^\ast,\boldz^\ast}(f) = \|f-\varphi^\ast\|_{\sfE}$ then
\begin{equation}
    A(\boldz^\ast, \boldalpha^\ast )^T\begin{pmatrix}
    \|f-\varphi^\ast\|_{\sfE}\\
    \lambda_1^\ast \\ \vdots \\ \lambda_n^\ast
    \end{pmatrix} = c_f(\boldz^\ast,\boldalpha^\ast)
    \label{eq:get_coefficients}
\end{equation}
and therefore if $A(\boldz^\ast,\boldalpha^\ast)$ is invertible we can recover the extremal coefficients $\lambda_1^\ast,\dotsc,\lambda_n^\ast$ from $A(\boldz^\ast, \boldalpha^\ast )$ and $c_f(\boldz^\ast,\boldalpha^\ast)$. We assume, as in \cite{tang87,tang88,fischer-modersitzki-93} that $\sfE = I\eqqcolon[0,1]$ (which is no restriction) since we can always parametrize $\sfE$ using $[0,1]$. To emphasize that we are working on $[0,1]$ we let $\boldz = \boldt = \{t_j\}$.

\red{An implementation of the algorithm in Python can be found at \url{https://github.com/olguvirubin/complex_chebyshev}}

\begin{algorithm}[h!]
\DontPrintSemicolon
  \KwInput{Basis functions $\{\varphi_k\}_{k=1}^{n}$, $f\in \cC_\R(I)$, $\{t_j^{(1)}\}_{j=1}^{n+1}$, $\{\alpha_j^{(1)}\}_{j=1}^{n+1}, \mathrm{threshold}$}
  \tcp*{The parameters may be taken at random but should satisfy that $A(\boldt^{(1)},\boldalpha^{(1)})$ is invertible.}
  \KwFind{$\boldr$ satisfying $A(\boldt^{(1)},\boldalpha^{(1)})\boldr = \begin{bmatrix}
      1\\
      \boldsymbol{0}
  \end{bmatrix}$.}
  \For{$j\in \{1,\dotsc,n+1\}$}
  {
  \If{$r_j<0$}{
  \KwLet{$\alpha_j^{(1)} \coloneqq \alpha_j^{(1)}\pm \pi\in [0,2\pi)$}
  }
  }
    \red{\KwLet{$\boldr^{(1)}\coloneqq A(\boldt^{(1)},\boldalpha^{(1)})^{-1}\begin{bmatrix}
      1\\
      \boldsymbol{0}
  \end{bmatrix}$.}}
  \tcp*{This will produce admissible parameters}
  \For{$\nu = 1,2,\cdots$}{
    \KwCompute{$\begin{bmatrix}
      h & \boldlambda
  \end{bmatrix}^T = \red{[A(\boldt,\boldalpha)^T]}^{-1}c_f(\boldt,\boldalpha)$}
  \tcp*{get trial coefficients $\boldlambda$}
  \KwLet{$\varphi \coloneqq \sum_{k=1}^{n}\lambda_k\varphi_k$}
  
  \If{$\|f-\varphi\|_I - h<\mathrm{threshold}\cdot h$}
    {
        \KwReturn{$\{t_j\}_{j=1}^{n+1}$, $\{\alpha_j\}_{j=1}^{n+1}$, $\{r_j\}_{j=1}^{n+1}$, $\{\lambda_j\}_{j=1}^{n}$} \tcp*{best approximation found}
    }
  
    \Else
    {
        \KwFind{$x\in [0,1]$ and $\vartheta\in [0,2\pi)$ satisfying $f(x)-\varphi(x) = e^{i\vartheta}\|f-\varphi\|_I$}
        \KwLet{$\boldd$ be \red{the} solution to $\boldd\cdot A(\boldt,\boldalpha)^T = \begin{bmatrix}
    1 & 
    \re(e^{-i\vartheta}\varphi_1(x)) &
    \cdots & 
    \re(e^{-i\vartheta}\varphi_n(x))
\end{bmatrix}$}
        \KwFind{$\rho = \operatorname{argmin}\{r_k/d_k:d_k>0\}$}
        \red{\KwLet{$\delta \coloneqq r_\rho/d_\rho$}}
        \KwLet{$t_\rho \coloneqq x$}
        \KwLet{$\alpha_\rho \coloneqq \vartheta$}
        \For{$j \in \{1\cdots n+1\}\setminus\{\rho\}$}{
        $r_j\coloneqq r_j-\delta\cdot d_j$
        }
        $r_\rho \coloneqq \delta$
    	
    }
  }
\caption{Remez Algorithm}
\label{alg:remez_algorithm}
\end{algorithm}

Algorithm \ref{alg:remez_algorithm} enables the computation of best approximations to complex valued functions.

\newpage

\end{document}